\theoremstyle{plain}
\newtheorem{theorem}{Theorem}[section]
\newtheorem{lemma}[theorem]{Lemma}
\newtheorem{proposition}[theorem]{Proposition}
\theoremstyle{definition}
\theoremstyle{remark}
\newtheorem{remark}{Remark}
\newcommand{\red}[1]{{\color{red}{#1}}} 
\newcommand{\Norm}[1]{{\left\|{#1} \right\|}}
\newcommand{\SemiNorm}[1]{{\left|{#1} \right|}}
\newcommand{\jump}[1]{\left[\!\left[#1\right]\!\right]}
\newcommand{\p}{p}
\newcommand{\h}{h}
\newcommand{\taun}{\mathcal T_h}
\newcommand{\betabold}{\boldsymbol\beta}
\newcommand{\Pcalp}{\mathcal P_\p}
\newcommand{\T}{T}
\newcommand{\Nbb}{\mathbb N}
\newcommand{\Pbb}{\mathbb P}
\newcommand{\Rbb}{\mathbb R}
\newcommand{\Gammaplus}{\Gamma_+}
\newcommand{\Gammaminus}{\Gamma_-}
\newcommand{\xbf}{\mathbf x}
\newcommand{\nbf}{\mathbf n}
\newcommand{\qp}{q_\p}
\newcommand{\qpmo}{q_{\p-1}}
\newcommand{\qpmoT}{\qpmo^\T}
\newcommand{\qpFplusT}{\qp^{\FplusT}}
\newcommand{\Ihat}{\widehat I}
\newcommand{\Pizp}{\Pi^0_\p}
\newcommand{\vbf}{\mathbf v}
\newcommand{\vbfhat}{\widehat{\vbf}}
\newcommand{\That}{\widehat T}
\newcommand{\ptilde}{\widetilde \p}
\newcommand{\Qhat}{\widehat Q}
\newcommand{\FplusThat}{F^+_{\That}}
\newcommand{\FminusThat}{F^-_{\That}}
\newcommand{\Dcal}{\mathcal D}
\newcommand{\zbf}{\mathbf z}
\newcommand{\jbf}{\mathbf j}
\newcommand{\cbar}{\overline c}
\newcommand{\Fcalh}{\mathcal F_h}
\newcommand{\F}{F}
\newcommand{\hT}{\h_\T}
\newcommand{\hF}{\h_\F}
\newcommand{\nbfT}{\mathbf n_\T}
\newcommand{\nbfF}{\mathbf n_\F}
\newcommand{\FcalT}{\red{\mathcal F_\T}}
\newcommand{\Vh}{V_\h}
\newcommand{\uh}{u_\h}
\newcommand{\vh}{v_\h}
\DeclareMathOperator{\out}{out}
\DeclareMathOperator{\interior}{in}
\newcommand{\GammaTout}{\Gamma^\T_{\out}}
\newcommand{\GammaTin}{\Gamma^\T_{\interior}}
\newcommand{\Gammah}{\Gamma_\h^{\interior}}
\newcommand{\Tplus}{\T^+}
\newcommand{\Tminus}{\T^-}
\newcommand{\vhplus}{\vh^+}
\newcommand{\vhminus}{\vh^-}
\newcommand{\Bcal}{\mathcal B}
\newcommand{\nablah}{\nabla_\h}
\newcommand{\nbfGamma}{\nbf_{\Gamma}}
\DeclareMathOperator{\DG}{DG}
\newcommand{\FplusT}{\F_\T^+}
\newcommand{\eh}{e_\h}
\DeclareMathOperator{\REM}{\mathcal R}
\newcommand{\J}{J}
\newcommand{\jtilde}{\widetilde j}
\title{\normalsize{$\h\p$-optimal convergence of the original DG method for linear hyperbolic problems on special simplicial meshes}}
\author{\normalsize{Z. Dong\thanks{Inria, 2 rue Simone Iff, 75589 Paris, France and CERMICS, Ecole des Ponts, 77455 Marne-la-Vall\'{e}e 2, France, {\tt zhaonan.dong@inria.fr}},
L. Mascotto\thanks{Dipartimento di Matematica e Applicazioni, Universit\`a di Milano Bicocca, 20125 Milan, Italy, {\tt lorenzo.mascotto@unimib.it};
IMATI-CNR, 27100, Pavia, Italy;
Fakult\"at f\"ur Mathematik, Universit\"at Wien, 1090 Vienna, Austria, {\tt lorenzo.mascotto@univie.ac.at}}}}
\date{}
\begin{document}

\maketitle

\begin{abstract}
\noindent We prove $\h\p$-optimal error estimates for the original {discontinuous Galerkin (DG)} method
when approximating solutions to first-order hyperbolic problems
with constant convection fields
in the $L^2$ and DG norms.
The main theoretical tools used in the analysis
are novel $\h\p$-optimal approximation properties
of the special projector introduced in [Cockburn, Dong, Guzm\'an, SINUM, 2008].
We assess the theoretical findings on some test cases.

\medskip\noindent
\textbf{AMS subject classification}: 65N12; 65N30; 65N50.

\medskip\noindent
\textbf{Keywords}: discontinuous Galerkin;
optimal convergence;
linear hyperbolic problems;
a priori error estimation;
$\p$-version.
\end{abstract}

\section{Introduction} \label{section:introduction}

The discontinuous Galerkin ({DG}) method was
{first} introduced in~\cite{Reed-Hill:1973}
for the approximation of solutions to the neutron transport equation,
i.e., a first-order linear hyperbolic problem.
This method,
{which we shall refer to as the original DG method,}
was {later} analysed in~\cite{Lesaint-Raviart:1974};
advances on the convergence of the scheme were given in~\cite{Johnson-Pitkaranta:1986}.
The convergence rates in the $L^2$ norm of the $h$-version of the method
proven therein are suboptimal by half an order;
this was also apparent from the numerical experiments in~\cite{Peterson:1991}.
On special classes of meshes,
optimal convergence for the $\h$-version was established in~\cite{Cockburn-Dong-Guzman:2008}.

Fewer results are available for the $\p$- and $\h\p$-versions of the method.
In~\cite{Houston-Schwab-Suli:2000},
$\h\p$ error estimates were derived
for a streamline diffusion version of the discontinuous Galerkin method
on fairly general quadrilateral meshes.
{Shortly after, in~\cite{Houston-Schwab-Suli:2002},}
for the original method
and linear polynomial convection fields,
$\p$ convergence was shown suboptimal by half a order for the convection-diffusion case,
but optimal for the linear hyperbolic case;
for more general vector fields, suboptimality by one order and a half was discussed as well.
The main theoretical tools in this reference
are $\h\p$ optimal approximation properties of the $L^2$ projector
on the boundary of tensor product elements.
More recently~\cite{Dong-Mascotto:2021} and again for tensor product elements,
such a suboptimality was reduced to half an order only for a special class of convective fields.

This paper aims at extending even more the knowledge on the convergence analysis
of the $\h\p$-{discontinuous Galerkin} ($\h\p$-DG)
for first order hyperbolic problems in the following aspects:
for constant convection fields,
\begin{itemize}
    \item we prove $\h\p$-optimal convergence of the method in the $L^2$ norm,
    thus generalizing the results in~\cite{Cockburn-Dong-Guzman:2008}
    to the $\p$-version of the method;
    \item we prove $\h\p$-optimal convergence of the method in the DG norm
    also on special simplicial meshes as in~\cite{Cockburn-Dong-Guzman:2008},
    thus generalizing the results in~\cite{Houston-Schwab-Suli:2000}
    to the case of simplicial meshes.
\end{itemize}
To {this} aim, we analyse the $\h\p$ approximation properties of the {Cockburn-Dong-Guzm\'an}
(henceforth denoted by CDG) projector
introduced in~\cite{Cockburn-Dong-Guzman:2008B}.
In particular, we generalise the one dimensional results in~\cite[Lemmas~$3.5$ and~$3.6$]{Schotzau-Schwab:2000} to simplices in two and three dimensions.

A possible reason why the simplicial mesh case was not contemplated in~\cite{Houston-Schwab-Suli:2000, Houston-Schwab-Suli:2002}
is that $\h\p$-optimal convergence estimates
for the trace of the polynomial $L^2$-projection operator on simplices{,}
crucial in the analysis of the CDG projector properties
were derived later~\cite{Chernov:2012, Melenk-Wurzer:2014}.

\paragraph*{Notation.}
Let~$D$ be a Lipschitz domain in~$\Rbb^d$, $d=1$, $2$, and $3$, with boundary~$\partial D$.
The space of Lebesgue measurable and square integrable functions over~$D$ is~$L^2(D)$.
The Sobolev space of positive integer order~$s$ is~$H^s(D)$.
We also write~$L^2(D) = H^0(D)$.
We endow~$H^s(D)$ with the inner product, seminorm, and norm
\[
(\cdot,\cdot)_{s,D},
\qquad\qquad\qquad
\SemiNorm{\cdot}_{s,D},
\qquad\qquad\qquad
\Norm{\cdot}_{s,D}.
\]
Interpolation theory is used to construct Sobolev spaces of positive noninteger order;
duality is used to define negative order Sobolev spaces.

The trace theorem is valid for~$H^s(D)$, $1/2<s<3/2$.
In particular, given~$s$ in the above range and~$g$ in~$H^{s-\frac12}(\Gamma_g)$,
being $\Gamma_g$ any subset of~$\partial D$ with nonzero measure in~$\partial D$,
we are allowed to define the space
\[
H^s_g(D,\Gamma_g):=
\{ v \in H^s(D,\Gamma_D) \mid v_{|\Gamma_D} = g \}.
\]
The space of polynomials of nonnegative degree~$\p$ over~$D$ is~$\Pbb_\p(D)$.

\paragraph*{The continuous problem.}
Given~$\Omega$ a Lipschitz domain in~$\Rbb^d$, $d=1,2,3$, with boundary~$\Gamma$,
consider~$\betabold$ in~$\Rbb^d$ and $c$ in~$L^{\infty}(\Omega)$.
We introduce the inflow part~$\Gammaminus$ of the boundary of~$\Omega$ as follows:
given~$\nbfGamma(\xbf)$ the outward normal to~$\Gamma$ at~$\xbf$,
\[
\Gammaminus := \{ \xbf \in \partial \Omega \mid \betabold \cdot \nbfGamma(\xbf) < 0 \}.
\]
{The characteristic part~$\Gamma_0$ is analogously defined as
\[
\Gamma_0 := \{ \xbf \in \partial \Omega \mid \betabold \cdot \nbfGamma(\xbf) = 0 \}.
\]
The outflow part~$\Gammaplus$ of~$\Gamma$ is given by~$\Gamma\setminus(\Gammaminus \cup  \Gamma_0)$.}
We are interested in the approximation of solutions to convection-reaction problems:
given~$g$ in~$L^2(\Gammaminus)$,
\begin{equation} \label{strong-formulation}
\begin{cases}
\text{find } u \text{ such that } \\
\betabold \cdot \nabla u + c \ u = f  & \text{in } \Omega \\
u = g                                 & \text{on } \Gammaminus.
\end{cases}
\end{equation}
Introduce the graph space
\[
V :=
\{ v \in L^2 (\Omega) \mid \betabold \cdot \nabla v \in L^2 (\Omega) \},
\]
which we endow with the graph norm
\begin{equation} \label{graph-norm}
\Norm{v}_{V}^2
:= \Norm{v}_{0,\Omega}^2 + \Norm{\betabold \cdot \nabla v}_{0,\Omega}^2 .
\end{equation}
We can define a trace operator from the graph space onto the space
\[
L^2(\vert \betabold \cdot \nbfGamma \vert, \Gamma)
:= \{ v \text{ measurable on } \Gamma \mid
        (\vert \betabold \cdot \nbfGamma \vert v,v)_{0,\Gamma} < \infty \}.
\]
{Let
\begin{equation} \label{graph-space-bcs}
V_g := \{ v \in V \mid \text{$g$ is the trace of $v$ and belongs to }
            L^2(\vert \betabold \cdot \nbfGamma \vert, \Gamma) \}.
\end{equation}}
We introduce the bilinear form on~$V_g \times L^2(\Omega)$ as
\[
b(u,v) = (\betabold \cdot \nabla u, v)_{0,\Omega}
          + (c\ u, v)_{0,\Omega}.
\]
The weak formulation of problem~\eqref{strong-formulation} reads
\begin{equation} \label{weak-formulation}
\begin{cases}
\text{find } u \in V_g \text{ such that} \\
b(u,v) = (f,v)_{0,\Omega} \qquad \forall v \in L^2(\Omega).
\end{cases}
\end{equation}
Henceforth, we assume that there exists a positive constant~$\cbar_0$ such that
\begin{equation} \label{standard-assumption-convection}
\cbar :=  c - \frac12 \nabla\cdot\betabold \ge \cbar_0 .
\end{equation}
Problem~\eqref{weak-formulation} is well posed with respect to the graph norm in~\eqref{graph-norm};
see, e.g.,~\cite{Bardos:1970}.

\paragraph*{Structure of the paper.}
In Section~\ref{section:meshes-method},
we introduce admissible simplicial meshes in the sense of~\cite{Cockburn-Dong-Guzman:2008}
and recall the original DG method from~\cite{Reed-Hill:1973}.
The main technical result of the paper,
i.e., $\h\p$ approximation properties of the CDG projector
are derived in Section~\ref{section:main}
in one, two, and three dimensions.
Such estimates are used to derive $\h\p$-optimal convergence of the method in the $L^2$ norm in Section~\ref{section:convergence};
there, we also show $\h\p$-optimal convergence of the method in a DG norm.
We assess the theoretical results with several numerical experiments
in Section~\ref{section:numerical-experiments}
and draw some conclusions in Section~\ref{section:conclusions}.

\section{Admissible meshes and the method} \label{section:meshes-method}

We introduce admissible simplicial meshes for the forthcoming analysis,
and define associated Sobolev and polynomial broken spaces
in Section~\ref{subsection:meshes},
and recall the original DG method for problem~\eqref{weak-formulation}
in Section~\ref{subsection:method}.

\subsection{Admissible simplicial meshes} \label{subsection:meshes}
We follow~\cite[Section~1]{Cockburn-Dong-Guzman:2008} and introduce admissible simplicial meshes
that are instrumental in deriving the main result of the paper;
see Section~\ref{section:main} below.

{Let~$\taun$ be a simplicial mesh of the domain~$\Omega$}
and~$\Fcalh$ be its set of ($d-1$)-dimensional facets.
We distinguish the facets in~$\Fcalh$ into internal and boundary facets;
the former are those~$\F$ not contained in~$\partial\Omega$.
The union of the interior facets of~$\taun$ is~$\Gammah$.

With each element~$\T$ of~$\taun$ we associate its diameter~$\hT$
and its outward unit vector~$\nbfT$,
which is defined almost everywhere on~$\partial\T$.
The set of ($d-1$)-dimensional facets of~$\T$ is~$\FcalT$.
With each facet~$\F$ in~$\Fcalh$ we associate its diameter~$\hF$
and a unit normal vector~$\nbfF$,
which is pointing outward~$\Omega$ if~$\F$ is a boundary facet.
{We only consider meshes~$\taun$ that are $\sigma$ shape-regular
i.e., any element~$\T$ of~$\taun$ is star-shaped
with respect to a ball of radius
larger than or equal to $\hT \sigma$.}
{We associate with each mesh~$\taun$
piecewise differential operators by adding a subscript~$\h$,
for instance~$\nablah$ denotes the piecewise gradient.}

We distinguish the facets in~$\FcalT$ into outflow, inflow, and characteristics (with respect to~$\betabold$) facets,
depending on whether they satisfy either of the two following properties:
\[
\nbfT{}_{|\F}\cdot\betabold > 0,
\qquad\qquad\qquad
\nbfT{}_{|\F}\cdot\betabold < 0,
\qquad\qquad\qquad
\nbfT{}_{|\F}\cdot\betabold = 0.
\]
The union of the outflow and inflow facets is~$\GammaTout$ and~$\GammaTin$, respectively.

We require that each element~$\T$ of~$\taun$ satisfy the two following properties:
\begin{itemize}
\item[(\textbf{A1})] $\T$ has only one outflow facet,
{which we denote by~$\FplusT$};
\item[(\textbf{A2})] each interior facet~$\F$ that is an inflow facet for~$\T$
is included in the outflow facet for another simplex in the mesh.
\end{itemize}
The second assumption above implies that~$\taun$ can be nonconforming,
i.e., hanging facets are allowed.
Meshes satisfying the above properties can be always constructed;
see, e.g., \cite[Appendix]{Cockburn-Dong-Guzman:2008}.

The focus of the paper is on $\p$-optimal estimates;
therefore, we pick quasi-uniform meshes
and denote the mesh size of~$\taun$ by~$\h$.

Given~$\p$ in~$\Nbb$ and~$s$ in~$\Rbb^+$,
we associate with a mesh~$\taun$ as in Section~\ref{subsection:meshes}
the spaces
\[
\Pbb_\p(\taun)
:= \{ \qp \in L^2(\Omega) \mid \qp{}_{|\T} \in \Pbb_\p(\T) \quad \forall \T \in \taun \}
\]
and
\[
H^s(\taun) := \{ v \in L^2(\Omega) \mid v_{|\T} \in H^s(\T)\quad \forall \T \in \taun \} .
\]
Differential operators defined piecewise over~$\taun$ are denoted with the same symbol of the original operator with an extra subcript~$\h$;
for instance, the broken gradient is~$\nablah$.

Given an interior facet~$\F$, we denote by~$\Tplus$ and~$\Tminus$
the two elements of~$\taun$
such that~$\F$ is an outflow and inflow facet for~$\Tplus$ and~$\Tminus$, respectively.
Given~$\vh$ in~$\Pbb_\p(\taun)$ and~$\xbf$ in the face~$\F$,
we write
\[
\vh^{\pm}(\xbf)  = \lim_{\delta\downarrow 0} \vh(\xbf \pm \delta \betabold).
\]
We introduce the jump operator~$\jump{\cdot}:H^1(\taun) \to L^2(\Gammah)$ given by $\jump{\vh}{}_{|\F}:=\vhplus-\vhminus$
for all internal edges~$\F$.

{Given a boundary facet~$\F$, we denote by~$\T$ the only
element of~$\taun$ such that~$\F$ belongs to~$\FcalT$.
Since every function~$\vh$ in~$\Pbb_\p(\taun)$
is single valued on boundary facets,
for all~$\xbf$ in the face~$\F$,
we write $\vh(\xbf)= \vh{}_{\T}(\xbf)$,
where~$\T$ is the only element of~$\taun$
such that~$\F$ belongs to~$\FcalT$.}

Henceforth, given two positive quantities~$a$ and~$b$,
we write~$a\lesssim b$ is there exists a positive constant~$c$
possibly depending on the shape-regularity parameter~$\sigma$ only,
such that~$a \le c \ b $.
If~$a\lesssim b$ and~$b \lesssim a$ at once, we write~$a \approx b$.

\subsection{The original DG method} \label{subsection:method}
Consider the space~$\Vh$ given by~$\Pbb_\p(\taun)$.
The original~\cite{Reed-Hill:1973} DG method for~\eqref{weak-formulation} reads
\begin{equation} \label{method}
\begin{cases}
\text{find } \uh \in \Vh \text{ such that} \\
\Bcal(\uh,\vh)
= (f,\vh)_{0,\Omega} - (g,\nbfGamma \cdot \betabold \ \vh)_{0,\Gammaminus}
\qquad \forall \vh \in \Vh.
\end{cases}
\end{equation}
where
\[
\Bcal(\uh,\vh)
:=  (\betabold\cdot \nablah\uh + c\ \uh, \vh)_{0,\Omega}
    - \sum_{\T \in \taun} (\jump{\uh}, \nbfT \cdot \betabold \ \vhplus)_{0,\GammaTin}
    -  (\uh, \nbfGamma \cdot \betabold \ \vh)_{0,\Gammaminus}.
\]
The solution~$\uh$ can be computed starting at the inflow boundary~$\Gammaminus$;
{solving local problems derived from~\eqref{method}
on the elements abutting~$\Gammaminus$;
transmitting the solution to the neighbouring elements
in the $\betabold$ direction through upwind.}

An equivalent alternative formulation used, e.g., in~\cite{Cockburn-Dong-Guzman:2008} is derived
using an integration by parts and the fact that~$\betabold$ is a constant field.
Notably, an integration by parts implies that
the bilinear form~$\Bcal(\cdot,\cdot)$ can be rewritten as
\small{\begin{equation} \label{Cockburn-formulation}
\begin{split}
\Bcal(\uh,\vh)
& = - (\uh, \betabold\cdot\nablah \vh)_{0,\Omega}
  \!+\! \sum_{\T\in\taun} (\uh^- , \nbfT \cdot \betabold \
         \jump{\vh})_{0,\GammaTin \setminus\Gammaminus}
  \!+\! (c\ \uh, \vh)_{0,\Omega}
  + (\uh,\nbfGamma\cdot \betabold \ \vh)_{0,\Gammaplus}.
\end{split}
\end{equation}}\normalsize{}
Method~\eqref{method} is well posed
{and coincides with that introduced in~\cite{Reed-Hill:1973}
and analysed in~\cite{Lesaint-Raviart:1974}.}
A priori estimates with respect to the data are derived in~\cite[Lemma~2.4 and Section~5]{Houston-Schwab-Suli:2000}.
{Previous results requiring stronger assumptions
on~$c$ are given, e.g., in \cite{Johnson-Pitkaranta:1986}.}

\begin{proposition} \label{proposition:stability-method}
Let~$\cbar$ be given in~\eqref{standard-assumption-convection}.
The following bound holds true:
\[
\cbar \Norm{\uh}_{0,\Omega}^2
+ \sum_{\T \in \taun} \Norm{\vert\nbfT \cdot \betabold \vert^\frac12\jump{\uh}}_{0,\GammaTin}^2
+ \Norm{\vert\nbfGamma \cdot \betabold \vert^{\frac12}\uh}_{0,\Gamma}^2
\le \cbar^{-1} \Norm{f}_{0,\Omega} + 2 \Norm{g}_{0,\Gammaminus}^2.
\]
\end{proposition}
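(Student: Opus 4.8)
The plan is to derive the energy estimate by testing the discrete problem~\eqref{method} against $\vh=\uh$ and then carefully handling the jump and boundary terms. First I would take $\vh = \uh$ in~\eqref{method} and work with the bilinear form in its original representation. The volume term $(\betabold\cdot\nablah\uh,\uh)_{0,\Omega}$ is integrated by parts elementwise: on each $\T$, $(\betabold\cdot\nabla\uh,\uh)_{0,\T} = \tfrac12\int_{\partial\T}(\nbfT\cdot\betabold)\,\uh^2$, and since $\betabold$ is constant (so $\nabla\cdot\betabold=0$) this is where I would instead average the two representations~\eqref{method} and~\eqref{Cockburn-formulation} of $\Bcal$ evaluated at $(\uh,\uh)$: adding them and dividing by two turns $(\betabold\cdot\nablah\uh,\uh) - (\uh,\betabold\cdot\nablah\uh)$ into a pure sum of facet terms, while $(c\,\uh,\uh)_{0,\Omega}$ becomes $(\cbar\,\uh,\uh)_{0,\Omega}\ge\cbar_0\Norm{\uh}_{0,\Omega}^2$ by~\eqref{standard-assumption-convection} (using $\nabla\cdot\betabold=0$, so in fact $\cbar=c\ge\cbar_0$).

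Next I would collect the facet contributions. After symmetrization, the interior-facet terms combine into $\tfrac12\sum_{\T}\Norm{\vert\nbfT\cdot\betabold\vert^{1/2}\jump{\uh}}_{0,\GammaTin}^2$ — this is the standard upwind identity, where on each interior inflow facet the cross terms $\uh^+\uh^-$ cancel against the same facet seen as an outflow facet of the neighbouring element (using assumption~(\textbf{A2}) to pair each interior inflow facet with an outflow facet) and the leftover $(\uh^+)^2 - 2\uh^+\uh^- + (\uh^-)^2 = \jump{\uh}^2$ survives. The boundary facets give $\tfrac12\Norm{\vert\nbfGamma\cdot\betabold\vert^{1/2}\uh}_{0,\Gamma}^2$: on $\Gammaplus$ the sign of $\nbfGamma\cdot\betabold$ is positive and the term appears with the right sign, while the $\Gammaminus$ contribution from the left-hand side combines with the $-(g,\nbfGamma\cdot\betabold\,\uh)_{0,\Gammaminus}$ data term on the right. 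Thus the left-hand side of~\eqref{method} at $\vh=\uh$ equals
\[
\cbar\Norm{\uh}_{0,\Omega}^2
+ \tfrac12\sum_{\T\in\taun}\Norm{\vert\nbfT\cdot\betabold\vert^{1/2}\jump{\uh}}_{0,\GammaTin}^2
+ \tfrac12\Norm{\vert\nbfGamma\cdot\betabold\vert^{1/2}\uh}_{0,\Gamma}^2 .
\]

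Finally I would bound the right-hand side $(f,\uh)_{0,\Omega} - (g,\nbfGamma\cdot\betabold\,\uh)_{0,\Gammaminus}$. For the volume term, Cauchy--Schwarz and Young's inequality give $(f,\uh)_{0,\Omega}\le \tfrac{1}{2\cbar}\Norm{f}_{0,\Omega}^2 + \tfrac{\cbar}{2}\Norm{\uh}_{0,\Omega}^2$, and the $\uh$-term is absorbed into the $\cbar\Norm{\uh}_{0,\Omega}^2$ on the left. For the inflow term, $-(g,\nbfGamma\cdot\betabold\,\uh)_{0,\Gammaminus} = (\vert\nbfGamma\cdot\betabold\vert^{1/2}g,\vert\nbfGamma\cdot\betabold\vert^{1/2}\uh)_{0,\Gammaminus}$, which by Young's inequality is at most $\Norm{\vert\nbfGamma\cdot\betabold\vert^{1/2}g}_{0,\Gammaminus}^2 + \tfrac14\Norm{\vert\nbfGamma\cdot\betabold\vert^{1/2}\uh}_{0,\Gammaminus}^2$; the last term is absorbed into the $\tfrac12$-weighted boundary term on the left (on $\Gammaminus$). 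Collecting everything and multiplying through by $2$ yields the claimed inequality. I do not expect a serious obstacle here; the one point requiring care is the bookkeeping of facet orientations and the cancellation of interior cross terms under nonconforming meshes — assumption~(\textbf{A2}) is exactly what makes each interior inflow facet match an outflow facet of another simplex, so the telescoping is clean, and the constant $2\Norm{g}_{0,\Gammaminus}^2$ in the statement is a (slightly lossy) bound for $2\Norm{\vert\nbfGamma\cdot\betabold\vert^{1/2}g}_{0,\Gammaminus}^2$ after normalising $\vert\nbfGamma\cdot\betabold\vert$, or is to be read with the weighted norm understood.
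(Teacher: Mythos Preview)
Your argument is the standard energy identity for upwind DG: average the two representations of $\Bcal(\uh,\uh)$, collect the resulting jump and boundary squares, and absorb the data via Young's inequality. This is correct, and it is exactly the route taken in the reference the paper cites; the paper itself does not give a proof of this proposition but simply refers to \cite[Lemma~2.4 and Section~5]{Houston-Schwab-Suli:2000}. Two minor remarks: first, the right-hand side in the stated inequality is visibly missing a square on $\Norm{f}_{0,\Omega}$ (your Young step correctly produces $\cbar^{-1}\Norm{f}_{0,\Omega}^2$), so do not worry about matching that constant exactly; second, after absorbing the $\Gammaminus$ term your coefficient on $\Norm{\vert\nbfGamma\cdot\betabold\vert^{1/2}\uh}_{0,\Gammaminus}^2$ is $\tfrac12$ rather than $1$, which is again a harmless discrepancy with the paper's displayed constants and does not affect any downstream use of the estimate.
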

All constants appearing in the stability estimates of Proposition~\ref{proposition:stability-method} are explicit,
and independent of~$\h$ and~$\p$.
This is essential to derive the optimal error estimates
in Section~\ref{section:convergence} below.
Former stability results display stability estimates
that are suboptimal in the polynomial degree;
see, e.g., \cite[Theorem~2.1]{Johnson-Pitkaranta:1986}.

Method~\eqref{method} is consistent.
The following Galerkin orthogonality property follows:
given~$u$ and~$\uh$ the solutions to~\eqref{weak-formulation} and~\eqref{method},
\begin{equation} \label{Galerkin-orthogonality}
\Bcal(u-\uh,\vh) = 0
\qquad\qquad\qquad
\forall \vh \in \Vh.
\end{equation}
In particular, the bilinear form~$\Bcal(\cdot,\cdot)$
is also well defined on $V_g \times \Vh$,
{where~$V_g$ is defined in~\eqref{graph-space-bcs}.}

\begin{remark} \label{remark:beta-non-constant}
{The case of nonconstant~$\betabold$
is considered in~\cite{Cockburn-Dong-Guzman-Qian:2010}.
Further assumptions on the advection field are needed.
The presentation of this manuscript might be generalised
to a similar setting at the price of extra technicalities.}
\end{remark}

\section{The CDG projector and its approximation properties} \label{section:main}

We recall some notation and technical results from the theory of orthogonal polynomials in Section~\ref{subsection:preliminary-results};
we define of the {Cockburn-Dong-Guzm\'an} (CDG) projector
and state the main result in Section~\ref{subsection:CDG-projector};
we prove $\p$-optimal approximation properties of the CDG operator
in one, two, and three dimensions
in Sections~\ref{subsection:1D-approx}, \ref{subsection:2D-approx}, and~\ref{subsection:3D-approx}, respectively;
in Section~\ref{subsection:trace-estimates} we prove trace-type estimates for the CDG projectors.

\subsection{Preliminary results} \label{subsection:preliminary-results}
We recall $\h\p$ approximation properties of the $L^2$ projector on a simplex~$\T$ in the $L^2$ norm on the boundary of~$\T$.
This result traces back to~\cite[Theorem~2.1]{Chernov:2012}
and~\cite[Theorem~1.1]{Melenk-Wurzer:2014}.
Let~$\Pizp:L^2(\T) \to \Pbb_{\p}(\T)$ denote the $L^2$ projector defined as
\begin{equation} \label{L2-projector}
(v-\Pizp v, \qp)_{0,\T} = 0
\qquad\qquad\qquad \forall \qp \in \Pbb_\p(\T).
\end{equation}

\begin{lemma} \label{lemma:Chernov-Melenk}
Under the notation of Section~\ref{subsection:meshes},
there exists a positive~$C$ independent of~$\sigma$ and~$d$ such that
\[
\Norm{u - \Pizp u}_{0,\FplusT} \le
C \left(\frac{\hT}{\p} \right)^{\frac12} \SemiNorm{u}_{1,\T}
\qquad\qquad\qquad
\forall u \in H^1(\T).
\]
As a consequence, for a possibly larger constant~$C$
{and~$k$ positive,} we also have
\[
\Norm{u - \Pizp u}_{0,\FplusT} \le
C \left(\frac{\hT}{\p} \right)^{k+\frac12} \Norm{u}_{k+1,\T}
\qquad\qquad\qquad
\forall u \in H^{k+1}(\T).
\]
In the one dimensional case, the norm on the right-hand side of
the last inequality is in fact a seminorm.
\end{lemma}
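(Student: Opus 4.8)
The plan is to obtain the first estimate from a $\p$-explicit trace bound for the $L^2$ projection on a \emph{fixed} reference simplex — which is exactly what~\cite[Theorem~2.1]{Chernov:2012} and~\cite[Theorem~1.1]{Melenk-Wurzer:2014} provide — by a standard affine scaling argument, and then to bootstrap the second estimate from the first by using that $\Pizp$ reproduces $\Pbb_\p(\T)$ together with classical $\h\p$ polynomial approximation.

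For the first estimate I would introduce an affine bijection $F_\T\colon\That\to\T$ from a reference simplex $\That$, with (constant) Jacobian matrix $B$. Because $B$ is constant, pulling back by $F_\T$ maps $\Pbb_\p(\T)$ bijectively onto $\Pbb_\p(\That)$ and turns the $L^2(\T)$ inner product into a constant multiple of the $L^2(\That)$ one; hence the $L^2$ projection commutes with the pull-back, so that $(\Pizp u)\circ F_\T$ is the $L^2$ projection of $u\circ F_\T$ onto $\Pbb_\p(\That)$. On $\That$ the cited references give, for any facet $\widehat F$ and with a constant independent of $\p$,
\[
\Norm{\vhat-\widehat\Pi^0_\p\,\vhat}_{0,\widehat F}\lesssim \p^{-\frac12}\,\SemiNorm{\vhat}_{1,\That}\qquad \forall\,\vhat\in H^1(\That).
\]
Transforming seminorms, $L^2$ boundary norms, and surface measures back to $\T$ with the usual shape-regularity scaling relations $\Norm{B}\approx\hT$, $\Norm{B^{-1}}\approx\hT^{-1}$, and noting that the affine change of variables introduces no $\p$-dependence into the constants (precisely because $B$ is constant), I would arrive at
\[
\Norm{u-\Pizp u}_{0,\FplusT}\lesssim \hT^{\frac12}\,\p^{-\frac12}\,\SemiNorm{u}_{1,\T}\qquad\forall\,u\in H^1(\T),
\]
which is the first claim.

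For the second estimate (with $\p\ge k$, as usual) I would use that $\Pizp$ is invariant on $\Pbb_\p(\T)$: for every $\qp\in\Pbb_\p(\T)$ one has $u-\Pizp u=(u-\qp)-\Pizp(u-\qp)$, so applying the first estimate to $u-\qp\in H^1(\T)$ gives
\[
\Norm{u-\Pizp u}_{0,\FplusT}\lesssim\left(\frac{\hT}{\p}\right)^{\frac12}\SemiNorm{u-\qp}_{1,\T}.
\]
Choosing $\qp$ to be a quasi-optimal $\h\p$ polynomial approximant of $u$ in $H^1(\T)$, for which $\SemiNorm{u-\qp}_{1,\T}\lesssim(\hT/\p)^{k}\,\SemiNorm{u}_{k+1,\T}$ by standard $\h\p$ polynomial approximation theory on simplices, I would conclude $\Norm{u-\Pizp u}_{0,\FplusT}\lesssim(\hT/\p)^{k+\frac12}\SemiNorm{u}_{k+1,\T}$, which is the second claim, with a larger constant.

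The hard part is not in this reduction but in the reference-simplex bound it invokes: unlike on tensor-product elements, the $L^2$ projection on a simplex does not tensorize, so the sharp exponent $\p^{-1/2}$ cannot be reached by a one-dimensional argument and genuinely requires the orthogonal-polynomial analysis of~\cite{Chernov:2012, Melenk-Wurzer:2014}. Granting that, the only delicate points here are to check that the affine scaling keeps every constant independent of $\p$ (which holds since $B$ is constant) and that the first estimate is applied legitimately to the non-polynomial remainder $u-\qp$ in the bootstrap step — both of which are routine.
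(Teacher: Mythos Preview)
The paper does not supply its own proof of this lemma: it is stated as a preliminary result and attributed directly to \cite[Theorem~2.1]{Chernov:2012} and \cite[Theorem~1.1]{Melenk-Wurzer:2014}, with no argument given. Your proposal is a correct and standard reconstruction of how the stated $\hT$-explicit form follows from those reference-simplex results --- affine pull-back (using that the $L^2$ projection commutes with affine maps) to import the $\p^{-1/2}$ trace bound, then polynomial invariance of $\Pizp$ plus an $\h\p$ approximant to bootstrap to order $k$ --- so there is nothing to compare against and nothing to correct.
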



\noindent As we are interested in deriving estimates explicit in the polynomial degree
and the estimates explicit in the element size are already known from~\cite{Cockburn-Dong-Guzman:2008B},
we shall prove approximation properties on reference elements.

Set~$\Ihat:=[-1,1]$ the reference interval.
We recall some properties of orthogonal polynomials;
see, e.g., \cite{Shen-Tang-Wang:2011}.
Let~$\{ L_j \}_{j=0}^{+\infty}$ be the $L^2$ orthogonal set of the Legendre polynomials over~$\Ihat$ satisfying
\cite[Corollary~3.6, $\alpha=0$, $\beta=0$]{Shen-Tang-Wang:2011}
\begin{equation} \label{orthogonality-Legendre}
(L_i,L_j)_{0,\Ihat} = \frac{2}{2j+1} \delta_{i,j}
\qquad\qquad
\forall i,j \in \Nbb_0 .
\end{equation}
Given~$\Gamma(\cdot)$ the Gamma function,
further let~$\{ \J_j^{\ell} \}_{j=0}^{+\infty}$, $\ell > -1$,
be the weighted-$L^2$ orthogonal set of Jacobi polynomials over~$\Ihat$ satisfying
\cite[Corollary~3.6, $\alpha=\ell$, $\beta=0$]{Shen-Tang-Wang:2011}
\begin{equation} \label{orthogonality-Jacobi}
((1-x)^\ell \J_i^\ell,\J_j^\ell)_{0,\Ihat}
= \frac{2^{\ell+1}}{2j+\ell+1}
    \frac{\Gamma(j+\ell+1) \Gamma(\ell+1)}{\Gamma(\ell+1) \Gamma(j+\ell+1)}\delta_{j,\ell}
= \frac{2^{\ell+1}}{2j+\ell+1} \delta_{j,\ell}
\qquad\qquad
\forall i,j \in \Nbb_0 .
\end{equation}

\subsection{The CDG projector and the main result} \label{subsection:CDG-projector}
The CDG operator was introduced
for the analysis of superconvergent DG methods for second-order elliptic problems
in~\cite{Cockburn-Dong-Guzman:2008B}.
It can be defined for simplices in any dimensions
satisfying assumptions (\textbf{A1})--(\textbf{A2}).
In particular, recall that~$\FplusT$ is the only outflow facet of~$T$.

The CDG operator
$\Pcalp : H^{\frac12+\varepsilon}(\T) \to \Pbb_\p(\T)$
is defined as
\begin{subequations}
\label{definition:Pcalp}
\begin{align}
    & (v-\Pcalp v, \qpmoT)_{0,\T} = 0
    \qquad\qquad\qquad \forall \qpmoT \in \Pbb_{\p-1}(\T),
    \label{definition:Pcalp1}\\
    & (v-\Pcalp v, \qpFplusT)_{0,\FplusT} = 0
    \qquad\qquad\qquad \forall \qpFplusT \in \Pbb_{\p}(\FplusT)
    \label{definition:Pcalp2}.
\end{align}
\end{subequations}
In~\cite{Cockburn-Dong-Guzman:2008B}, the following error estimate was proven
for sufficiently smooth functions:
\[
\Norm{v-\Pcalp v}_{0,\T}
\le C \ \h^{\p+1} \SemiNorm{v}_{\p+1,\Omega} .
\]
The constant~$C$ above depends on~$\sigma$ and the polynomial degree~$\p$.
The remainder of the section is devoted to carefully detail the dependence on~$\p$.
Namely, we shall prove the following result.

\begin{theorem} \label{theorem:hp-approximation-CDG}
Let the assumptions (\textbf{A1}) and (\textbf{A2}) be valid,
$\T$ be in~$\taun$,
and {$\Pizp$ be the operator in~\eqref{L2-projector}.}
{Then, there exist a positive constant~$C$
independent of~$\h$ and~$\p$, such that,
for any~$v$ in $H^{k+1}(T)$, $k$ positive,}
we have
\[
\Norm{v-\Pcalp v}_{0,\T}
{\le C \frac{\h^{\min(\p,k)+1}}{\p^{k+1}} \Norm{v}_{k+1,\T}.}
\]
\end{theorem}

\subsection{The 1D case} \label{subsection:1D-approx}
We prove Theorem~\ref{theorem:hp-approximation-CDG} in the case of one dimensional simplices,
i.e., on intervals.
This was already done in~\cite[Lemmas 3.5 and 3.6]{Schotzau-Schwab:2000}.
However, we deem that reviewing the main steps from~\cite{Schotzau-Schwab:2000}
is beneficial for the understanding of the extension to the two and three dimensional cases
in Sections~\ref{subsection:2D-approx} and~\ref{subsection:3D-approx} below, respectively.

Without loss of generality, we pick~$\T = \Ihat$ and assume $-1$ to be the outflow facet.
In 1D, the operator~$\Pcalp$ is defined imposing orthogonality up to order~$\p-1$ over~$\Ihat$
and imposing that~$\Pcalp v (-1) = v(-1)$;
this last condition replaces the orthogonality on the outflow facet.

The proof of optimal $\p$-convergence can be split in the following steps:
\begin{enumerate}
    \item we write~$\Pcalp v$ as a combination of Legendre polynomials
    and compute the corresponding coefficients;
    up to order~$\p-1$, the coefficients are the same as those of the expansion of~$v$;
    the coefficient of order~$\p$ is given by a tail of the other Legendre coefficients;
   \item we relate such a tail to the coefficients of~$v$ minus its $L^2$ projection
   on the triangle restricted to the outflow facet;
    \item we deduce the assertion using trace error estimates as in Lemma~\ref{lemma:Chernov-Melenk}.
\end{enumerate}

\paragraph*{Step~1: writing~$\Pcalp v$ with respect to the Legendre basis and with explicit coefficients.}
Given~$v$ in~$L^2(\Ihat)$, we consider its expansion with respect to Legendre polynomials
\[
v(x) = \sum_{j=0}^{+\infty} \vbf_j \ L_j(x).
\]
Consider also the truncated expansion up to order~$\p$ given by
\[
\Pcalp v(x) = \sum_{j=0}^\p \vbfhat_j \ L_j(x).
\]
The orthogonality condition~\eqref{definition:Pcalp1} with respect to polynomials of degree~$\p-1$ gives
\[
\Pcalp v(x) = \sum_{j=0}^{\p-1} \vbf_j \ L_j(x) + \vbfhat_\p \ L_\p(x).
\]
The last coefficient is given imposing the condition $v(-1) = \Pcalp v(-1)$.
More precisely, since~$L_j(-1)=(-1)^j$ for all~$j$ in~$\Nbb$,
we write
\[
v(-1)=\sum_{j=0}^{+\infty} (-1)^j \vbf_j ,
\qquad\qquad
\Pcalp v(-1)=\sum_{j=0}^{\p-1} (-1)^j \vbf_j
          + (-1)^\p \vbfhat_\p.
\]
We deduce
\[
(-1)^\p \vbfhat_\p
= \sum_{j=\p}^{+\infty} \vbf_j (-1)^j ,
\]
whence we get
\begin{equation} \label{expansion:Pcalp-1D}
\Pcalp v(x) = \sum_{j=0}^{\p-1} \vbf_j \ L_j(x)
                {+} \big(\sum_{j=\p}^{+\infty}  (-1)^{j-\p}\vbf_j \big) \ L_\p(x).
\end{equation}
Given~$\Pizp$ the~$L^2$ projection as in~\eqref{L2-projector},
we arrive at
\begin{equation} \label{expansion:v-Pcalpv:1D}
\begin{split}
(v - \Pcalp v)(x)
& = \sum_{j=\p}^{+\infty} \vbf_j \ L_j(x)
    - \big(\sum_{j=\p}^{+\infty} (-1)^{j-\p} \vbf_j \big) \ L_\p(x) \\
& = \sum_{j=\p+1}^{+\infty} \vbf_j \ L_j(x)
    + \vbf_\p L_\p(x) - \vbf_\p L_\p(x)
    - \big(\sum_{j=\p+1}^{+\infty} (-1)^{j-\p} \vbf_j \big) \ L_\p(x) \\
& = \sum_{j=\p+1}^{+\infty} \vbf_j \ L_j(x)
    - \big(\sum_{j=\p+1}^{+\infty} (-1)^{j-\p} \vbf_j \big) \ L_\p(x) \\
& = (v - \Pizp v)(x) - (-1)^{-\p} \big(\sum_{j=\p+1}^{+\infty} (-1)^{j} \vbf_j \big) \ L_\p(x) .
\end{split}
\end{equation}

\paragraph*{Step~2: relate the tail of the Legendre coefficients to the trace of~$v-\Pizp v$
on the outflow facet.}
Observe that
\begin{equation} \label{trace:v-Pizpv:1D}
(v-\Pizp v) (x)
= \sum_{j=\p+1}^{+\infty} \vbf_j L_j(x),
\qquad
(v-\Pizp v) (-1)
= \sum_{j=\p+1}^{+\infty} \vbf_j L_j(-1)
= \sum_{j=\p+1}^{+\infty} (-1)^j \vbf_j.
\end{equation}
Combining~\eqref{expansion:v-Pcalpv:1D} and~\eqref{trace:v-Pizpv:1D} gives
\[
(v - \Pcalp v)(x)
= (v - \Pizp v)(x)
- (-1)^\p [(v-\Pizp v)(-1)] L_\p(x)
\]
Recall from~\eqref{orthogonality-Legendre} that
\begin{equation} \label{norm-Legendre}
\Norm{L_\p}^2_{0,\Ihat} = \frac{2}{2\p+1}.
\end{equation}
We deduce
\[
\Norm{v-\Pcalp v}_{0,\Ihat}^2
\le \Norm{v-\Pizp v}_{0,\Ihat}^2
     + \vert (v-\Pizp v)(-1) \vert^2 \frac{2}{2\p+1}.
\]

\paragraph*{Step~3: deduce the desired bound using Lemma~\ref{lemma:Chernov-Melenk}.}
The 1D version of Lemma~\ref{lemma:Chernov-Melenk} reads
\[
\vert (v-\Pizp v)(-1) \vert^2
\lesssim (\p+1)^{-1} \SemiNorm{v}_{1,\Ihat}^2.
\]
Combining the two equations above yields
\[
\Norm{v-\Pcalp v}_{0,\Ihat}
\lesssim \Norm{v-\Pizp v}_{0,\Ihat}
     + (\p+1)^{-1} \SemiNorm{v}_{1,\Ihat}.
\]
{The assertion follows
noting that~$\Pcalp$ and~$\Pizp$ preserve polynomials of maximum degree~$\p$
and standard polynomial approximation results.}

\subsection{The 2D case} \label{subsection:2D-approx}
We prove Theorem~\ref{theorem:hp-approximation-CDG} in two dimensions.
Compared to the 1D case in Section~\ref{subsection:1D-approx},
the role of orthogonal basis functions is now played by tensor product Jacobi polynomials (with different weights)
collapsed on the triangle via the Duffy transformation;
in other words, we use Koornwinder polynomials
following the construction in~\cite{Beuchler-Schoeberl:2006, Chernov:2012};
see also~\cite{Dubiner:1991, Koornwinder:1975}.

First, we introduce a reference 2D simplex~$\T$:
\begin{equation} \label{reference-triangle}
\That:=
\left\{
\left( z\frac{1-y}{2} , y    \right) \in \Rbb^2
\ \middle| \
(z,y)\in [-1,1]^2
\right\}.
\end{equation}
The reference triangle~$\That$ corresponds to the reference square~$\widehat Q:= [-1,1]^2$
collapsed under the Duffy transformation
\[
(z,y) \in \widehat Q  \qquad \Longrightarrow \qquad \left( z\frac{1-y}{2} , y \right) .
\]
This particular choice of~$\That$ is convenient for our purposes,
as we shall analyse the approximation properties of~$\Pcalp$
assuming that~$\FplusThat$ is the segment~$[-1,1] \times \{-1\}$.

We introduce the orthogonal polynomial basis (known as Koornwinder polynomial basis~\cite{Koornwinder:1975}) over~$\That$ given by
\begin{equation} \label{Koornwinder-basis}
\Phi_{j,\ell}(x,y)
:= L_j \left( \frac{2x}{1-y} \right) \J_\ell^{2j+1} (y) \left( \frac{1-y}{2} \right)^j
\qquad\qquad \forall j,\ \ell \in \Nbb .
\end{equation}
This is an orthogonal basis over~$\That$.
In fact, using the Duffy transformation, we have
\small{\[
\int_{\That} \Phi_{i,j}(x,y) \Phi_{k, \ell} (x,y) dx \ dy
= \left(\int_{-1}^1 L_i(z) L_k(z) \ dz\right)
    \left( \int_{-1}^1 \J_j^{2i+1} (y) \J_\ell^{2k+1} (y) \left( \frac{1-y}{2} \right)^{i+k+1} dy \right).
\]}\normalsize{}
If~$i\ne k$, the above inner product is zero.
Thus, without loss of generality, we can assume~$i=k$.
Using~\eqref{orthogonality-Jacobi},
we write
\small{\begin{equation} \label{orthogonality:Koornwinder:2D}
\begin{split}
& \int_{\That} \Phi_{i,j}(x,y) \Phi_{i, \ell}(x,y) dx \ dy
  = \left( \int_{-1}^1 L_i(z) L_i(z) \ dz \right)
    \left( \int_{-1}^1 \J_j^{2i+1} (y) \J_\ell^{2i+1} (y) \left( \frac{1-y}{2} \right)^{2i+1} dy \right) \\
& = {\frac{2}{2i+1}}  2^{-2j-1}
    \frac{2^{2j+2}}{2\ell+2j+2} \delta_{j,\ell}
  = {\frac{2}{2i+1}} 2^{-2j-1}
    \frac{2^{2j+1}}{j+\ell+1} \delta_{j,\ell}
  = {\frac{2}{2i+1}}
    \frac{1}{j+\ell+1} \delta_{j,\ell} .
\end{split}
\end{equation}}\normalsize{}
With this at hand, we extend the results in Section~\ref{subsection:1D-approx} to 2D simplices.
Henceforth, let~$v$ denote a sufficiently regular function over~$\That$.

\paragraph*{Step~1: writing~$\Pcalp v$ with respect to the Koornwinder basis and with explicit coefficients.}
We expand~$v$ with respect to the Koornwinder basis in~\eqref{Koornwinder-basis}:
\begin{equation} \label{2D-Koornwinder-expansion}
v(x,y) = \sum_{j+\ell=0}^{+\infty} \vbf_{j,\ell} \Phi_{j,\ell} (x,y).
\end{equation}
{In what follows, the indices appearing
under the summation symbol are always taken nonnegative;
moreover, we set~$\vbf_{j,\ell}$ equal to~$0$
if~$j$ or~$\ell$ are negative.}

Since~$\Pcalp v$ belongs to~$\Pbb_\p(\That)$, we expand it with respect to the Koornwinder basis:
\[
\Pcalp v(x,y)
= \sum_{j+\ell=0}^\p \vbfhat_{j,\ell} \Phi_{j,\ell}(x,y).
\]
By the orthogonality property~\eqref{definition:Pcalp1}, we readily get
\[
\Pcalp v(x,y)
= \sum_{j+\ell=0}^{\p-1} \vbf_{j,\ell} \Phi_{j,\ell}(x,y)
   + \sum_{j+\ell=\p} \vbfhat_{j,\ell} \Phi_{j,\ell}(x,y).
\]
The coefficients corresponding to the Koornwinder's polynomials of order~$\p$
are found using the facet orthogonality condition~\eqref{definition:Pcalp2}
on the facet~$\FplusThat$.
Notably, we impose the identity
\[
\int_{\FplusThat}
\Big[ {\sum_{j+\ell = \p+1}^{\infty}} \vbf_{j,\ell} \Phi_{j,\ell}(x,-1)
       + \sum_{j+\ell = \p} (\vbf_{j,\ell} - \vbfhat_{j,\ell})\Phi_{j,\ell}(x,-1)
\Big]
L_k(x) \ dx = 0
\qquad \forall k=0,\dots,\p.
\]
Using~\eqref{Koornwinder-basis}
and the fact that $\J_\ell^{2i+1}(-1) = (-1)^\ell$,
this identity can be rewritten as
\[
\begin{split}
& \sum_{j+\ell=\p}    \int_{\FplusThat} (\vbf_{j,\ell} - \vbfhat_{j,\ell}) (-1)^{\p-j} L_j(x) L_k(x) dx \\
& \qquad\qquad + \sum_{\ptilde=\p+1}^{+\infty} \sum_{j+\ell=\ptilde}
            \int_{\FplusThat} \vbf_{j,\ell} (-1)^{\ptilde-j} L_j(x) L_k(x) dx = 0
            \quad \forall  k=0,\dots,\p.
\end{split}
\]
Further using~\eqref{norm-Legendre} and the orthogonality property~\eqref{orthogonality-Legendre} of the Legendre polynomials,
we end up with
\[
(-1)^{\p-k} \frac{2}{2k+1}  [\vbf_{k,\p-k} - \vbfhat_{k,\p-k} ]
+ \sum_{\ptilde = \p+1}^{+\infty} (-1)^{\ptilde-k} \frac{2}{2k+1} \vbf_{k,\ptilde-k} = 0
\qquad\qquad \forall k=0,\dots,\p.
\]
In other words, the order~$\p$ coefficients of~$\Pcalp v$ are given by
\[
(-1)^{\p-k}\vbfhat_{k,\p-k}
= \sum_{\ptilde = \p}^{+\infty} (-1)^{\ptilde-k} \vbf_{k,\ptilde-k}
\qquad\qquad \forall k=0,\dots,\p.
\]
This is the 2D counterpart of~\eqref{expansion:Pcalp-1D}.
For completeness, we collect the above identities and arrive at
\[
\Pcalp v(x,y)
= \sum_{j+\ell=0}^{\p-1} \vbf_{j,\ell} \Phi_{j,\ell}(x,y)
   + \sum_{j+\ell=\p}
   \Big( \sum_{\ptilde = \p}^{+\infty} (-1)^{\ptilde-\p} \vbf_{j,\ptilde-j} \Big)
   \Phi_{j,\ell}(x,y).
\]
As in the 1D case, we deduce
\begin{equation} \label{expansion:v-Pcalpv:2D}
\begin{split}
(v-\Pcalp v)(x,y)
& = \sum_{j+\ell=\p}^{+\infty} \vbf_{j,\p-j} \Phi_{j,\ell}(x,y)
    - \sum_{j+\ell=\p} (\sum_{\ptilde=\p}^{+\infty} (-1)^{\ptilde-\p} \vbf_{j,\ptilde-j}) \Phi_{j,\ell}(x,y) \\
& = \sum_{j+\ell=\p+1}^{+\infty} \vbf_{j,\p-j} \Phi_{j,\ell}(x,y)
    - \sum_{j+\ell=\p} (\sum_{\ptilde=\p+1}^{+\infty} (-1)^{\ptilde-\p} \vbf_{j,\ptilde-j}) \Phi_{j,\ell}(x,y) \\
& = (v-\Pizp v)(x,y)
    - \sum_{j+\ell=\p} (\sum_{\ptilde=\p+1}^{+\infty} (-1)^{\ptilde-\p} \vbf_{j,\ptilde-j}) \Phi_{j,\ell}(x,y) .
\end{split}
\end{equation}

\paragraph*{Step~2: relate the tail of the Koornwinder coefficients to the trace of~$v-\Pizp v$ on the outflow facet.}
Using that~$J_\ell^{2j+1}(-1)=(-1)^\ell$ for all~$\ell$ and~$j$ in~$\Nbb$,
we write explicitly the trace of~$v-\Pizp v$ on~$\FplusThat$:
\[
(v-\Pizp v)(x,y)
= \sum_{j+\ell=\p+1}^{+\infty} \vbf_{j,\ell} \Phi_{j,\ell} (x,y),
\qquad
(v-\Pizp v)(x,-1)
= \sum_{j+\ell=\p+1}^{+\infty} (-1)^\ell \vbf_{j,\ell} L_j(x).
\]
The orthogonality property~\eqref{orthogonality-Legendre} of the Legendre polynomials over~$\Ihat := [-1,1]$ and~\eqref{norm-Legendre},
we write
\[
\begin{split}
\Norm{v-\Pizp v}_{0,\FplusThat}^2
& = \sum_{j=0}^{+\infty} \Norm{L_j}_{0,\Ihat}^2
        \Big( \sum_{\ell=\max(\p+1-j,0)}^{+\infty} (-1)^\ell \vbf_{j,\ell}   \Big)^2 \\
& = \sum_{j=0}^{+\infty} \frac{2}{2j+1}
        \Big( \sum_{\ell=\max(\p+1-j,0)}^{+\infty} (-1)^\ell \vbf_{j,\ell}   \Big)^2 .
\end{split}
\]
Next, we focus on the second term on the right-hand side of~\eqref{expansion:v-Pcalpv:2D}.
Using~\eqref{orthogonality:Koornwinder:2D},
we can write
\small{\begin{equation} \label{important-estimate-2D}
\begin{split}
& \Norm{\sum_{j+\ell=\p}
   \big(  \sum_{\ptilde=\p+1}^{+\infty} (-1)^{\ptilde-\p} \vbf_{j,\ptilde-j}  \big) \Phi_{j,\ell}}_{0,\That}^2
  = \sum_{j+\ell=\p} \big(  \sum_{\ptilde=\p+1}^{+\infty} (-1)^{\ptilde-\p} \vbf_{j,\ptilde-j}  \big)^2 \Norm{\Phi_{j,\ell}}_{0,\That}^2 \\
& =\sum_{j+\ell=\p} \big(  \sum_{\ptilde=\p+1}^{+\infty} (-1)^{\ptilde-\p} \vbf_{j,\ptilde-j}  \big)^2
  \frac{2}{2j+1}  \frac{1}{j+\ell+1}
= \frac{1}{\p+1} \sum_{j=0}^\p  \frac{2}{2j+1} \big(  \sum_{\ptilde=\p+1}^{+\infty} (-1)^{\ptilde-\p} \vbf_{j,\ptilde-j}  \big)^2  \\
& \le \frac{1}{\p+1} \sum_{j=0}^{+\infty} \frac{2}{2j+1}
                \big(  \sum_{\ptilde=\p+1}^{+\infty} (-1)^{\ptilde-\p} \vbf_{j,\ptilde-j}  \big)^2
  = \frac{1}{\p+1} \Norm{v-\Pizp v}_{0,\FplusThat}^2.
\end{split}
\end{equation}}\normalsize{}
The last identity follows from the fact that the quantity
in the parenthesis is squared
and the only relevant fact is the alternating sign of the
$(-1)^{\ptilde-\p}$ term.

\paragraph*{Step~3: deduce the desired bound using Lemma~\ref{lemma:Chernov-Melenk}.}
Combining the above inequality with~\eqref{expansion:v-Pcalpv:2D}
and using the 2D version of Lemma~\ref{lemma:Chernov-Melenk},
we deduce
\[
\Norm{v-\Pcalp v}_{0,\That}
\le \Norm{v-\Pizp}_{0,\That}
     + (\p+1)^{-\frac12} \Norm{v-\Pizp v}_{0,\FplusThat}
\lesssim \Norm{v-\Pizp}_{0,\That}
     + (\p+1)^{-1} \SemiNorm{v}_{1,\That}.
\]
{The assertion follows
noting that~$\Pcalp$ and~$\Pizp$ preserve polynomials of maximum degree~$\p$
and standard polynomial approximation results.}

\subsection{The 3D case} \label{subsection:3D-approx}
We prove Theorem~\ref{theorem:hp-approximation-CDG} in three dimensions.
Compared to the 1D and 2D cases of Sections~\ref{subsection:1D-approx} and~\ref{subsection:2D-approx},
the role of orthogonal basis functions is now played by tensor product Jacobi polynomials (of different orders)
collapsed on the tetrahedron using the 3D Duffy transformation;
in other words, we use 3D Koornwinder polynomials
following the construction in~\cite{Chernov:2012, Warburton-Hestaven:2003, Karniadakis-Sherwin:2005}.

Introduce the reference cube~$\Qhat = [-1,1]^3$
and the reference tetrahedron~$\That$ of vertices
\[
(-1,-1,-1); \qquad\qquad
(1,-1,-1); \qquad\qquad
(0,1,-1); \qquad\qquad
(0,0,1).
\]
We denote the lower face of~$\That$ by~$\FplusThat$;
it coincides with the reference triangle in Section~\ref{subsection:2D-approx}
at~$z=1$.

We consider the 3D Duffy transformation~$\Dcal:\Qhat \to \That$
that maps each~$(z_1,z_2,z_3)=\zbf$ in~$\Qhat$
into~$(x_1,x_2,x_3)=\xbf$ in~$\That$ as follows:
\[
x_1 = z_1 \frac{1-z_2}{2} \frac{1-z_3}{2},
\qquad\qquad
x_2 = z_2 \frac{1-z_3}{2},
\qquad\qquad
x_3 = z_3.
\]
The Jacobian  of the transformation is
\[
\det
\begin{bmatrix}
\frac{1-z_2}{2} \frac{1-z_3}{2} & 0                 & 0 \\
-\frac12 z_1 \frac{1-z_3}{2}    & \frac{1-z_3}{2}   & 0 \\
-\frac12 z_1 \frac{1-z_2}{2}    & -\frac{z_2}{2}    & 1 \\
\end{bmatrix}
= \frac{1-z_2}{2} \left( \frac{1-z_3}{2} \right)^2.
\]
We also compute the formal inverse~$\Dcal^{-1}$ of the transformation~$\Dcal$:
\[
z_3 = x_3,
\qquad\qquad
z_2 = x_2 \frac{2}{1-z_3} =  \frac{2x_2}{1-x_3},
\qquad\qquad
z_1 = x_1 \frac{4}{(1-z_2)(1-z_3)} = \frac{4x_1}{1-2x_2-x_3}.
\]
We introduce the following $L^2$-orthogonal basis over~$\That$:
given a multi-index $\jbf=(j_1,j_2,j_3)$ in~$\Nbb^3$,
\[
\Phi_{\jbf}(\xbf)
= L_{j_1}\left( \frac{4x_1}{1-2x_2-x_3} \right)
  \J_{j_2}^{2j_1+1} \left( \frac{2x_2}{1-x_3} \right) \left( \frac{1-2x_2-x_3}{2(1-x_3)}  \right)^{j_1}
  \J_{j_3}^{2j_1+2j_2+2}  (x_3) \left( \frac{1-x_3}{2} \right)^{j_1+j_2} .
\]
We only check the norm of the above Koornwinder polynomials.
Transforming~$\That$ into~$\Qhat$ by~$\Dcal^{-1}$
and recalling~\eqref{orthogonality-Jacobi},
we deduce
\begin{equation} \label{Koornwinder-norm-3D}
\begin{split}
& \int_{\That} \Phi_{\jbf}(\xbf) \Phi_{\jbf}(\xbf) d\xbf \\
& = \int_{\Qhat} L_{j_1}(z_1)^2
                \J_{j_2}^{2j_1+1} (z_2)^2 \left( \frac{1-z_2}{2} \right)^{2j_2+1}
                \J_{j_3}^{2j_1+2j_2+2}(z_3)^2 \left( \frac{1-z_3}{2} \right)^{2j_1+2j_2+2} \!\!\! d\zbf \\
& = \int_{\Ihat} L_{j_1}(z_1)^2 \ dz_1
  \int_{\Ihat}  \J_{j_2}^{2j_1+1} (z_2)^2 \left( \frac{1-z_2}{2} \right)^{2j_2+1} \!\!\! dz_2
  \int_{\Ihat}  \J_{j_3}^{2j_1+2j_2+2}(z_3)^2 \left( \frac{1-z_3}{2} \right)^{2j_1+2j_2+2} \!\!\! dz_3 \\
& = \frac{2}{2j_1+1} \frac{2}{2j_1+2j_2+2} \frac{2}{{2j_1+2j_2+2j_3+3}}
\qquad
\forall \jbf=(j_1,j_2,j_3) \in \Nbb^3.
\end{split}
\end{equation}
Next, we prove $\p$-optimal approximation properties in the $L^2$ norm of the operator~$\Pcalp$ given as in~\eqref{definition:Pcalp}
with the role of outflow face played by the facet~$\FplusThat$.

\paragraph*{Step~1: writing~$\Pcalp v$ with respect to the 3D version Koornwinder basis and with explicit coefficients.}
Consider the following expansion of~$v$:
\[
v(\xbf)
= \sum_{\vert\jbf\vert=0}^{+\infty}\vbf_{\jbf} \Phi_{\jbf}(\xbf).
\]
{In what follows, the indices appearing
under the summation symbol are always taken nonnegative;
$\vert \jbf \vert$ stands for $j_1+j_2+j_3$;
$\vbf_{\jbf}$ is set to~$0$ if any of the indices~$j_1$,
$j_2$, and~$j_3$ is negative.}

Using~\eqref{definition:Pcalp1}, we realise that
\[
v(\xbf)
= \sum_{\vert\jbf\vert=0}^{\p-1}\vbf_{\jbf} \Phi_{\jbf}(\xbf)
  + \sum_{\vert\jbf\vert=\p} \vbfhat_{\jbf} \Phi_{\jbf}(\xbf) ,
\]
where the coefficients~$\vbfhat_{\jbf}$,
$\vert\jbf\vert=\p$ are to be determined using~\eqref{definition:Pcalp2}.

First, we write the trace of any polynomial basis function on~$\FplusThat$,
i.e., we impose the passage through~$x_3=-1$:
\[
\Phi_{\jbf}(\xbf){}_{|\FplusThat}
= L_{j_1}\left( \frac{2x_1}{1-x_2} \right)
  \J_{j_2}^{2j_1+1} ( x_2 ) \left( \frac{1-x_2}{2} \right)^{j_1} (-1)^{j_3}.
\]
Let~$\Psi_{\ell_1,\ell_2}$ be the 2D Koornwinder basis in~\eqref{Koornwinder-basis}
on the reference triangle~$\FplusThat$.
We impose~\eqref{definition:Pcalp2}:
\[
\begin{split}
& \sum_{\vert \jbf \vert=\p}
\int_{\FplusThat} [\vbf_{\jbf} - \vbfhat_{\jbf}]
    (-1)^{j_3} L_{j_1}\left( \frac{2x_1}{1-x_2}  \right)
              L_{j_2}(x_2) \left( \frac{1-x_2}{2} \right)^{j_1} \Psi_{\ell_1,\ell_2} (x_1,x_2) dx_1 dx_2 \\
& + \sum_{\ptilde = \p+1}^{+\infty}
     \sum_{\vert\jbf\vert=\ptilde}
\int_{\FplusThat} \vbf_{\jbf}
    (-1)^{j_3} L_{j_1}\left( \frac{2x_1}{1-x_2}  \right)
              L_{j_2}(x_2) \left( \frac{1-x_2}{2} \right)^{j_1} \Psi_{\ell_1,\ell_2} (x_1,x_2) dx_1 dx_2
   =0 \\
& \qquad\qquad
{\forall \ell_1,\ell_2 \text{ such that } \ell_1+\ell_2\le \p.}
\end{split}
\]
We rewrite the above relation as
\[
\begin{split}
& \sum_{\vert \jbf \vert=\p}
\int_{\FplusThat} [\vbf_{\jbf} - \vbfhat_{\jbf}]
    (-1)^{j_3} \Psi_{j_1,j_2}(x_1,x_2) \Psi_{\ell_1,\ell_2} (x_1,x_2) dx_1 dx_2 \\
& + \sum_{\ptilde = \p+1}^{+\infty}
     \sum_{\vert\jbf\vert=\ptilde}
\int_{\FplusThat} \vbf_{\jbf}
    (-1)^{j_3} \Psi_{j_1,j_2}(x_1,x_2) \Psi_{\ell_1,\ell_2} (x_1,x_2) dx_1 dx_2
   =0
    \quad {\forall \ell_1,\ell_2 \text{ such that } \ell_1+\ell_2\le \p.}
\end{split}
\]
Using the orthogonality property~\eqref{orthogonality:Koornwinder:2D}
of the Koornwinder polynomials in 2D,
we deduce
{\[
\begin{split}
& (-1)^{\p-\ell_1-\ell_2}
\frac{2}{2\ell_1+1}\frac{1}{\ell_1+\ell_2+1}
(\vbf_{\ell_1,\ell_2,\p-\ell_1-\ell_2}
-\vbfhat_{\ell_1,\ell_2,\p-\ell_1-\ell_2}) \\
& + \sum_{\ptilde=\p+1}^{+\infty}
  (-1)^{\ptilde-(\ell_1+\ell_2)}
\frac{2}{2\ell_1+1}\frac{1}{\ell_1+\ell_2+1}
\vbf_{\ell_1,\ell_2,\ptilde-\ell_1-\ell_2} =0
\qquad \forall \ell_1,\ell_2 \text{ such that } \ell_1+\ell_2\le \p.
\end{split}
\]}
In other words, we have
\[
\vbfhat_{\ell_1,\ell_2,\p-\ell_1-\ell_2}
= \sum_{\ptilde=\p}^{+\infty}
  (-1)^{\ptilde-\p}
\vbf_{\ell_1,\ell_2,\ptilde-\ell_1-\ell_2}
\qquad\qquad
{\forall \ell_1,\ell_2 \text{ such that } \ell_1+\ell_2\le \p.}
\]
This provides us with the representation
\[
\Pcalp v(\xbf)
= \sum_{\vert \jbf \vert=0}^{\p-1}
    \vbf_{\jbf} \Phi_{\jbf}(\xbf)
  + \sum_{\vert \jbf \vert=\p}
    \big( \sum_{\ptilde=\p}^{+\infty}
       (-1)^{\ptilde-\p}
     {\vbf_{j_1,j_2,\ptilde-j_1-j_2}}
    \big) \Phi_{\jbf}(\xbf).
\]
We get
\begin{equation} \label{expansion:v-Pcalpv:3D}
\begin{split}
& (\vbf-\Pcalp\vbf)(\xbf)
 = \sum_{\vert \jbf \vert=\p}^{+\infty}
    \vbf_{j_1,j_2,\p-j_1-j_2} \Phi_j(\xbf)
    - \sum_{\vert \jbf \vert=\p}
    \big( \sum_{\ptilde=\p}^{+\infty}
  (-1)^{\ptilde-\p}
\vbf_{{j_1,j_2,\ptilde-j_1-j_2}}
    \big) \Phi_{\jbf}(\xbf)\\
& = \sum_{\vert \jbf \vert=\p+1}^{+\infty}
    \vbf_{j_1,j_2,\p-j_1-j_2} \Phi_j(\xbf)
    - \sum_{\vert \jbf \vert=\p}
    \big( \sum_{\ptilde=\p+1}^{+\infty}
  (-1)^{\ptilde-\p}
\vbf_{{j_1,j_2,\ptilde-j_1-j_2}}
    \big) \Phi_{\jbf}(\xbf) \\
& = (\vbf-\Pizp \vbf)(\xbf)
    - \sum_{\vert \jbf \vert=\p}
    \big( \sum_{\ptilde=\p+1}^{+\infty}
  (-1)^{\ptilde-\p}
\vbf_{{j_1,j_2,\ptilde-j_1-j_2}}
    \big) \Phi_{\jbf}(\xbf) .
\end{split}
\end{equation}

\paragraph*{Step~2: relate the tail of the 3D version Koornwinder coefficients to the trace of~$v-\Pizp v$ on the outflow facet.}
The restriction of~$\vbf-\Pizp\vbf$ on the outflow facet~$\FplusThat$
reads
\[
\begin{split}
(\vbf - \Pcalp \vbf)(\xbf)_{|\FplusThat}
& = \sum_{\vert \jbf\vert=\p+1}^{+\infty}
   \vbf_{\jbf} L_{j_1} \left( \frac{2x_1}{1-x_2} \right)
    \J_{j_2}^{2j_1+1} (x_2)
    \left( \frac{1-x_2}{2} \right)^{j_1} (-1)^{j_3} \\
& = \sum_{\vert \jbf\vert=\p+1}^{+\infty}
   \vbf_{\jbf} \Psi_{j_1,j_2}(\xbf) (-1)^{j_3} .
\end{split}
\]
We take the $L^2$ norm on both sides over~$\FplusThat$,
use the orthogonality property~\eqref{orthogonality:Koornwinder:2D} of Koornwinder polynomials over~$\FplusThat$,
and get,
for~$\jtilde$ equal to $\max(\p+1-(j_1+j_2),0)$,
\small{\[
\Norm{\vbf - \Pcalp \vbf}_{0,\FplusThat}
= \sum_{j_1+j_2=0}^{+\infty}
    \Norm{\Psi_{j_1,j_2}}_{0,\FplusThat}^2
    \big( \sum_{j_3=\jtilde}
          (-1)^{j_3} \vbf_{\jbf}    \big)^2
 = \sum_{j_1+j_2=0}^{+\infty}
    \frac{2}{2j_2+1} \frac{1}{j_1+j_2+1}
    \big( \sum_{j_3=\jtilde}
          (-1)^{j_3} \vbf_{\jbf}    \big)^2 .
\]}\normalsize{}
We show a bound on the $L^2$ norm over~$\That$ of the second term on the right-hand side of~\eqref{expansion:v-Pcalpv:3D}
in terms of the $L^2$ norm of~$\vbf - \Pcalp \vbf$ on~$\FplusThat$.
To this aim, we use~\eqref{Koornwinder-norm-3D}:
\small{\[
\begin{split}
& \Norm{\sum_{\vert \jbf \vert=\p}
    \big( \sum_{\ptilde=\p+1}^{+\infty}
  (-1)^{\ptilde-\p}
\vbf_{{j_1,j_2,\ptilde-j_1-j_2}}
    \big) \Phi_{\jbf}(\xbf)}_{0,\That}^2
= \sum_{\vert \jbf \vert=\p}
    \big( \sum_{\ptilde=\p+1}^{+\infty}
        (-1)^{\ptilde-\p}
        \vbf_{{j_1,j_2,\ptilde-j_1-j_2}}  \big)^2
        \Norm{\Phi_{\jbf}}_{0,\That}^2 \\
& = \sum_{\vert \jbf \vert=\p}
    \big( \sum_{\ptilde=\p+1}^{+\infty}
        (-1)^{\ptilde-\p}
        \vbf_{{j_1,j_2,\ptilde-j_1-j_2}}  \big)^2
        \frac{2}{2j_1+1}\frac{1}{j_1+j_2+1}  \frac{1}{j_1+j_2+j_3+1}\\
& = \frac{1}{\p+1} \sum_{j_1+j_2=0}^{\p}
    \big( \sum_{\ptilde=\p+1}^{+\infty}
        (-1)^{\ptilde-\p}
        \vbf_{{j_1,j_2,\ptilde-j_1-j_2}}  \big)^2
        \frac{2}{2j_1+1}\frac{1}{j_1+j_2+1} \\
& \le \frac{1}{\p+1} \sum_{j_1+j_2=0}^{+\infty}
    \big( \sum_{\ptilde=\p+1}^{+\infty}
        (-1)^{\ptilde-\p}
        \vbf_{{j_1,j_2,\ptilde-j_1-j_2}}  \big)^2
        \frac{2}{2j_1+1}\frac{1}{j_1+j_2+1}
    = \frac{1}{\p+1} \Norm{\vbf-\Pizp\vbf}_{0,\FplusThat}^2.
\end{split}
\]}\normalsize{}

\paragraph*{Step~3: deduce the desired bound using optimal trace error estimates.}
Combining the above inequality with~\eqref{expansion:v-Pcalpv:2D}
and using the 3D version of Lemma~\ref{lemma:Chernov-Melenk},
we deduce
\[
\Norm{v-\Pcalp v}_{0,\That}
\le \Norm{v-\Pizp}_{0,\That}
     + (\p+1)^{-\frac12} \Norm{v-\Pizp v}_{0,\FplusThat}
\lesssim \Norm{v-\Pizp}_{0,\That}
     + (\p+1)^{-1} \SemiNorm{v}_{1,\That}.
\]
{The assertion follows
noting that~$\Pcalp$ and~$\Pizp$ preserve polynomials of maximum degree~$\p$
and standard polynomial approximation results.}

\subsection{Trace-type estimates for the CDG operator} \label{subsection:trace-estimates}
The CDG operator satisfies $\h\p$-optimal approximation properties in the $L^2$ norm; see Theorem~\ref{theorem:hp-approximation-CDG}.
Here, we investigate $\h\p$-optimal approximation properties in the $L^2$ norm on the boundary of a simplex~$\T$,
i.e., the counterpart of Lemma~\ref{lemma:Chernov-Melenk} for the projector~$\Pcalp$.

To {this} aim, we show separate bounds on the facets.
First, we consider the case of the (unique) outflow facet~$\F^+_\T$.
By~\eqref{definition:Pcalp2} and Lemma~\ref{lemma:Chernov-Melenk}, we get
\[
\Norm{v-\Pcalp v}_{0,\F^+_\T}
\le \Norm{v-\Pizp v}_{0,\F^+_\T}
\lesssim \left(\frac{\hT}{\p}\right)^{\frac12} \SemiNorm{v}_{1,\T}
\qquad\qquad\qquad \forall v \in H^1(\T).
\]
Approximation properties on the inflow facets are more elaborated
and are discussed in the next result.
\begin{proposition} \label{proposition:trace-estimates}
Let the assumptions (\textbf{A1}) and (\textbf{A2}) be valid,
$\T$ be in~$\taun$,
$\F^-_\T$ be any of the inflow facets of~$\T$,
and~$\Pizp$ be the operator in~\eqref{L2-projector}.
Then, there exist positive constants~$C_1$ and~$C_2$ independent of~$\h$ and~$\p$ such that,
{for every $v$ in $H^{k+1}(\T)$, $k$ positive,}
the following estimate holds true:
\begin{equation} \label{hp-trace-estimates}
\Norm{v-\Pcalp v}_{0,\F^-_\T}
\le C_1 \Norm{v-\Pizp v}_{0,\partial \T}
\le C_2 \frac{\h^{\min(\p,k)+\frac12}}{\p^{k+\frac12}} \Norm{v}_{k+1,\T} .
\end{equation}
\end{proposition}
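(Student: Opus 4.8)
The plan is to reduce, exactly as in Section~\ref{section:main}, to a reference simplex~$\That$ with~$\hT\approx1$, on which~$\Pcalp$ and~$\Pizp$ commute with the affine map onto~$\T$ and the~$\hT$-powers are recovered afterwards as in the proof of Theorem~\ref{theorem:hp-approximation-CDG}; both inequalities in~\eqref{hp-trace-estimates} are invariant under this rescaling. The second inequality is then just Lemma~\ref{lemma:Chernov-Melenk} stated on the whole boundary~$\partial\That$ rather than on the single facet~$\FplusThat$ --- this full-boundary version is exactly the estimate of~\cite{Chernov:2012, Melenk-Wurzer:2014} --- together with standard polynomial approximation. So the only genuinely new point is the first inequality.

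For the first inequality I would split $v-\Pcalp v=(v-\Pizp v)+\rho$ with $\rho:=\Pizp v-\Pcalp v\in\Pbb_\p(\That)$. By~\eqref{definition:Pcalp1} and the definition of~$\Pizp$, $\rho$ is $L^2(\That)$-orthogonal to~$\Pbb_{\p-1}(\That)$; by~\eqref{definition:Pcalp2} the trace~$\rho{}_{|\FplusThat}\in\Pbb_\p(\FplusThat)$ is the $L^2(\FplusThat)$-projection of~$-(v-\Pizp v){}_{|\FplusThat}$, hence $\Norm{\rho}_{0,\FplusThat}\le\Norm{v-\Pizp v}_{0,\FplusThat}\le\Norm{v-\Pizp v}_{0,\partial\That}$. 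Since also $\Norm{v-\Pizp v}_{0,\F^-_\T}\le\Norm{v-\Pizp v}_{0,\partial\That}$, the first inequality will follow once I prove the $\p$-uniform bound
\[
\Norm{\rho}_{0,\F^-_\T}\lesssim\Norm{\rho}_{0,\FplusThat}
\qquad\quad
\forall\,\rho\in\Pbb_\p(\That)\ \text{with}\ \rho\perp_{L^2(\That)}\Pbb_{\p-1}(\That),
\]
i.e.\ that the $\Pbb_{\p-1}(\That)$-orthogonal polynomial extension of a datum given on~$\FplusThat$, once restricted to an inflow facet, is bounded $L^2(\FplusThat)\to L^2(\F^-_\T)$ uniformly in~$\p$.

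To this end I would expand $\rho=\sum_{|\jbf|=\p}c_\jbf\Phi_\jbf$ in the Koornwinder basis of Sections~\ref{subsection:2D-approx}--\ref{subsection:3D-approx}: by~\eqref{orthogonality:Koornwinder:2D}, resp.~\eqref{Koornwinder-norm-3D}, one has $\Norm{\rho}_{0,\FplusThat}^2\approx\sum_\jbf w_\jbf c_\jbf^2$ with explicit weights (in two dimensions, $w_{j,\p-j}=2/(2j+1)$), whereas the restriction of~$\Phi_\jbf$ to an inflow facet produces, because the Duffy transformation collapses the opposite direction, the weighted Jacobi polynomial $u\mapsto u^{j}\,\J_{\p-j}^{2j+1}(1-2u)$ (with $u=(1-y)/2$) in two dimensions, and the analogous doubly collapsed products in three. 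The displayed bound is therefore equivalent to the almost-orthogonality estimate
\[
\int_0^1\Big(\sum_{j=0}^{\p}c_j\,u^{j}\,\J_{\p-j}^{2j+1}(1-2u)\Big)^{2}du\ \lesssim\ \sum_{j=0}^{\p}\frac{c_j^2}{2j+1}
\]
for all~$(c_j)$, and to its three-dimensional counterpart; note that the polynomials $u\mapsto u^{j}\J_{\p-j}^{2j+1}(1-2u)$ are orthogonal on~$\That$ but \emph{not} with respect to~$du$ on the facet, so this is genuinely a statement about the conditioning of the collapsed basis.

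The hard part will be precisely this Gram-matrix estimate --- the inflow-facet analogue of the outflow-facet trace bound of Lemma~\ref{lemma:Chernov-Melenk} --- because it must hold with a \emph{$\p$-independent} constant, whereas a crude Cauchy--Schwarz or polynomial inverse-trace argument loses a factor~$\sqrt{\p}$. I would obtain it by showing (i)~that the diagonal entries $\int_0^1 u^{2j}\big(\J_{\p-j}^{2j+1}(1-2u)\big)^{2}du$ are comparable to~$1/(2j+1)$, using that the weight~$u^{2j+1}$ pushes the mass of~$u^{2j+1}\big(\J_{\p-j}^{2j+1}(1-2u)\big)^{2}du$ away from~$u=0$, so that dropping one power of~$u$ costs only a constant --- which one reads off from the Mehler--Heine and bulk asymptotics of Jacobi polynomials, or directly from their contiguous relations --- and (ii)~that the off-diagonal entries decay in~$|j-k|$ fast enough to close a Schur test; the three-dimensional case then follows along the same lines, with one more collapsed direction, in the spirit of~\cite{Beuchler-Schoeberl:2006, Chernov:2012}. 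In one dimension the whole estimate is trivial, since~$\F^-_\T$ is a single point and $\rho=c\,L_\p$ takes values of equal modulus at the two endpoints, so the new content lies entirely in dimensions two and three.
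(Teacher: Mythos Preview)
Your reduction is exactly the paper's: both split $v-\Pcalp v=(v-\Pizp v)-\REM$ with $\REM=-\rho$ a combination of the top-degree Koornwinder polynomials, and both reduce the first inequality to controlling $\Norm{\REM}_{0,\FminusThat}$ by $\Norm{v-\Pizp v}_{0,\FplusThat}$. The one-dimensional case and the second inequality in~\eqref{hp-trace-estimates} are handled identically.

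Where you diverge is in the Gram-matrix step, and here you make the problem much harder than it is. You anticipate that the restrictions $\Phi_{j,\p-j}|_{\FminusThat}$ are \emph{not} $L^2$-orthogonal and therefore plan a Schur test fed by Mehler--Heine asymptotics and contiguous relations --- which you flag as ``the hard part'' and leave open. The paper sidesteps this entirely, because the restrictions \emph{are} $L^2(\FminusThat)$-orthogonal. Parametrising $\FminusThat$ by~$y\in[-1,1]$ one has, up to a sign, $\Phi_{j,\p-j}|_{\FminusThat}=\big(\tfrac{1-y}{2}\big)^{j}\J_{\p-j}^{2j+1}(y)$; for $j>k$ the factor $\big(\tfrac{1-y}{2}\big)^{j+k}$ splits as $\big(\tfrac{1-y}{2}\big)^{2k+1}\big(\tfrac{1-y}{2}\big)^{j-k-1}$, and since $\big(\tfrac{1-y}{2}\big)^{j-k-1}\J_{\p-j}^{2j+1}$ has degree $\p-k-1$ it is annihilated by $\J_{\p-k}^{2k+1}$ against the weight $(1-y)^{2k+1}$. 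So all off-diagonal entries of your Gram matrix vanish and no Schur test is needed. What remains is the diagonal $\Norm{\Phi_{j,\p-j}}_{0,\FminusThat}^2$, which the paper evaluates directly via~\eqref{orthogonality-Jacobi} as a quantity of order $(\p+1)^{-1}$ for every~$j$; this factor is then balanced against the $(2\p+1)/2$ incurred when passing from $\sum_j |a_j|^2$ to $\sum_j \tfrac{2}{2j+1}|a_j|^2 \le \Norm{v-\Pizp v}_{0,\FplusThat}^2$, exactly as in~\eqref{important-estimate-2D}.

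In short, your proposal is correct in structure but leaves as its ``hard part'' an estimate that is in fact an identity: the almost-orthogonality you seek is exact orthogonality, so the asymptotics and off-diagonal decay are unnecessary, and your claimed diagonal behaviour $\approx 1/(2j+1)$ is replaced by the cleaner explicit value $\approx 1/(\p+1)$. The same mechanism carries over verbatim to three dimensions with one more collapsed direction.
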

\begin{proof}
The proof of the 1D case is fairly simple.
The two and three dimensional cases are more elaborated;
we provide details for the 2D case, as the 3D case can be dealt with similarly.
Throughout, we employ the notation of Sections
\ref{subsection:CDG-projector}--\ref{subsection:3D-approx}.
We prove the assertion on the reference simplex~$\That$;
the general statement follows from a scaling argument.

\paragraph*{Proof of~\eqref{hp-trace-estimates} in 1D.}
Here~$\That=[-1,1]$.
From~\eqref{expansion:v-Pcalpv:1D},
\eqref{trace:v-Pizpv:1D},
and~$L_j(1)=1$ for all $j$ in~$\Nbb$,
we deduce
\[
\vert (v-\Pcalp v)(1) \vert
\le \vert (v-\Pizp v)(-1) \vert + \vert (v-\Pizp v)(1) \vert.
\]
The assertion follows from Lemma~\ref{lemma:Chernov-Melenk}.

\paragraph*{Proof of~\eqref{hp-trace-estimates} in 2D.}
Here, $\That$ is as in~\eqref{reference-triangle}.
{Recall expansion~\eqref{2D-Koornwinder-expansion}
with the caveat that coefficients
related to negative indices are set to zero.}

Introduce
\[
\REM:=
\sum_{j+\ell=\p} (\sum_{\ptilde=\p+1}^{+\infty} (-1)^{\ptilde-\p} \vbf_{j,\ptilde-j}) \Phi_{j,\ell}(x,y)
\]
and
\[
{\widehat C_\p} := \max_{j+\ell=\p}  \Norm{\Phi_{j,\ell}(x,y)}_{0,\FminusThat}.
\]
We have
\[
\Norm{\REM}_{0,\FminusThat}^2
\lesssim
\sum_{j+\ell=\p} \vert \sum_{\ptilde=\p+1}^{+\infty} (-1)^{\ptilde-\p} \vbf_{j,\ptilde-j}\vert^2
                \Norm{\Phi_{j,\ell}(x,y)}_{0,\FminusThat}^2
\le {\widehat C_\p}^2 \sum_{j+\ell=\p} \vert \sum_{\ptilde=\p+1}^{+\infty} (-1)^{\ptilde-\p} \vbf_{j,\ptilde-j}\vert^2,
\]
whence we deduce
\[
\begin{split}
\Norm{\REM}_{0,\FminusThat}^2
& \lesssim {\widehat C_\p}^2 \sum_{j+\ell=\p} \vert \sum_{\ptilde=\p+1}^{+\infty} (-1)^{\ptilde-\p} \vbf_{j,\ptilde-j}\vert^2
            \frac{2}{2j+1} \frac{2j+1}{2}\\
& \le {\widehat C_\p}^2 \frac{2\p+1}{2} \sum_{j+\ell=\p} \vert \sum_{\ptilde=\p+1}^{+\infty} (-1)^{\ptilde-\p} \vbf_{j,\ptilde-j}\vert^2 \frac{2}{2j+1} .
\end{split}
\]
We proceed as in~\eqref{important-estimate-2D} and arrive at
\[
\Norm{\REM}_{0,\FminusThat}^2
\lesssim {\widehat C_\p}^2 \frac{2\p+1}{2}
        \sum_{j+\ell=\p} \vert \sum_{\ptilde=\p+1}^{+\infty} (-1)^{\ptilde-\p} \vbf_{j,\ptilde-j}\vert^2
        \frac{2}{2j+1}
 \le {\widehat C_\p}^2 \frac{2\p+1}{2} \Norm{v-\Pizp v}_{0,\FplusThat}^2.
\]
Combining~\eqref{expansion:v-Pcalpv:2D} with the inequality above yields
\begin{equation} \label{bound-outflow-with-inflow}
\Norm{v-\Pcalp v}_{0,\FminusThat}
\le \Norm{v-\Pizp v}_{0,\FminusThat} + \Norm{\REM}_{0,\FminusThat}
\lesssim \Norm{v-\Pizp v}_{0,\FminusThat} + {\widehat C_\p} \p^\frac12 \Norm{v-\Pizp v}_{0,\FplusThat}.
\end{equation}
If we were able to prove that~${\widehat C_\p} \lesssim \p^{-\frac12}$,
then the assertion would follow from Lemma~\ref{lemma:Chernov-Melenk} and~\eqref{bound-outflow-with-inflow}.
In fact, we would write
\[
\Norm{v-\Pcalp v}_{0,\FminusThat}
\le \Norm{v-\Pizp v}_{0,\FminusThat} + \Norm{\REM}_{0,\FminusThat}
\lesssim  \Norm{v-\Pizp v}_{0,\FminusThat} + \Norm{v-\Pizp v}_{0,\FplusThat}.
\]
We are left with showing~${\widehat C_\p} \lesssim \p^{-\frac12}$.
For the sake of presentation, we fix $\FminusThat$ to be the inflow facet of~$\That$ with negative~$x$ coordinates.
If this is the case, then a direct application of definition~\eqref{Koornwinder-basis} gives
\[
\Phi_{j,\ell}{}_{|\FminusThat}
= (-1)^j \J_\ell^{2j+1}(y) \left( \frac{1-y}{2} \right)^j.
\]
Taking into account the Jacobian of the transformation mapping the facet~$\FminusThat$
into the ``vertical'' facet $\{-1\} \times [-1,1]$
and recalling~\eqref{orthogonality-Jacobi},
we deduce
\[
\Norm{\Phi_{j,\ell}}_{0,\FminusThat}^2
= \int_{-1}^1 \left[\J_\ell^{2j+1}(y)\right]^2
                \left( \frac{1-y}{2} \right)^{2j+1}
= \frac{2^{2j+2}}{2j+1+2\ell+1} \frac{1}{2^{2j+1}}
= \frac{1}{1+j+\ell}.
\]
Since we are considering couples $(j,\ell)$ such that~$j+\ell=\p$, we arrive at
\[
\Norm{\Phi_{j,\ell}}_{0,\FminusThat}^2 = \frac{1}{\p+1},
\]
i.e., ${\widehat C_\p} \lesssim \p^{-\frac12}$.
\end{proof}

\section{A priori analysis} \label{section:convergence}
We prove $\h\p$-optimal a priori error estimates for method~\eqref{method}.
The proof is essentially the $\h\p$-version of that of~\cite[Theorem~$2.2$]{Cockburn-Dong-Guzman:2008}.
We report here the details for three reasons:
for the sake of completeness;
since we are interested in $\h\p$-optimal a priori bounds;
because we have different assumptions,
namely we require~\eqref{standard-assumption-convection} for well posedness of the method,
but we have no restrictions on the mesh size~$\h$
as for instance required in~\cite{Johnson-Pitkaranta:1986}.

\begin{theorem} \label{theorem:hp-a-priori}
Let~$\taun$ be a shape-regular, quasi uniform mesh satisfying the assumptions (\textbf{A1})--(\textbf{A2}),
and~$u$ and~$\uh$ be the solutions to~\eqref{weak-formulation} and~\eqref{method}.
If~\eqref{standard-assumption-convection} holds true
and~$u$ belongs to~$H^{k+1}(\Omega)$,
{$k$ positive,}
then there exists a positive constant~$C$ independent of~$\h$ and~$\p$
but possibly depending on~$\sigma$ such that
\[
\Norm{u-\uh}_{0,\Omega}
\le C \frac{\h^{\min(k,\p)+1}}{\p^{k+1}} \Norm{u}_{k+1,\Omega}.
\]
\end{theorem}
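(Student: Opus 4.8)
The plan is to derive the $L^2$ error estimate by the standard DG duality-free argument: split the error using the CDG projector, use Galerkin orthogonality together with the stability estimate of Proposition~\ref{proposition:stability-method} to control the discrete part of the error by the projection error, and then invoke the $\h\p$-optimal approximation properties of $\Pcalp$ (Theorem~\ref{theorem:hp-approximation-CDG} and Proposition~\ref{proposition:trace-estimates}) to bound the projection error by the right-hand side.

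\medskip\noindent\textbf{Step 1: error splitting.} Write $u-\uh = (u-\Pcalp u) + (\Pcalp u - \uh) =: \eta + \e_\h$, where $\eta$ is the CDG projection error and $\e_\h \in \Vh$. The triangle inequality reduces the task to estimating $\Norm{\eta}_{0,\Omega}$ and $\Norm{\e_\h}_{0,\Omega}$ separately. The first is already controlled elementwise by Theorem~\ref{theorem:hp-approximation-CDG}, summed over $\taun$.

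\medskip\noindent\textbf{Step 2: the discrete error via stability and Galerkin orthogonality.} Apply the stability bound of Proposition~\ref{proposition:stability-method} to $\e_\h$, which solves $\Bcal(\e_\h,\vh) = \Bcal(\Pcalp u - u,\vh) + \Bcal(u-\uh,\vh) = \Bcal(\Pcalp u - u,\vh)$ for all $\vh\in\Vh$, by Galerkin orthogonality~\eqref{Galerkin-orthogonality}. Taking $\vh = \e_\h$ and using the coercivity-type lower bound implicit in Proposition~\ref{proposition:stability-method}, we get $\cbar_0 \Norm{\e_\h}_{0,\Omega}^2 \lesssim \vert \Bcal(\eta,\e_\h)\vert$. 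The key is now to bound $\vert\Bcal(\eta,\e_\h)\vert$ using the defining orthogonality properties~\eqref{definition:Pcalp1}--\eqref{definition:Pcalp2} of $\Pcalp$. Using the alternative form~\eqref{Cockburn-formulation} of $\Bcal$, the volume term $-(\eta,\betabold\cdot\nablah\e_\h)_{0,\Omega}$ vanishes because $\betabold\cdot\nablah\e_\h \in \Pbb_{\p-1}(\taun)$ and~\eqref{definition:Pcalp1} holds; the outflow boundary term $(\eta,\nbfGamma\cdot\betabold\,\e_\h)_{0,\Gammaplus}$ vanishes on each outflow facet by~\eqref{definition:Pcalp2} since $\Gammaplus$ is a union of outflow facets $\FplusT$; likewise the interelement term on $\GammaTin\setminus\Gammaminus$ involves $\eta^-$, the trace of $\eta$ from the element for which the facet is an \emph{outflow} facet (by assumption~(\textbf{A2})), so it too is killed by~\eqref{definition:Pcalp2}. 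This leaves only the reaction term $(c\,\eta,\e_\h)_{0,\Omega}$, bounded by $\Norm{c}_{L^\infty}\Norm{\eta}_{0,\Omega}\Norm{\e_\h}_{0,\Omega}$. Hence $\Norm{\e_\h}_{0,\Omega} \lesssim \Norm{\eta}_{0,\Omega}$.

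\medskip\noindent\textbf{Step 3: conclusion.} Combining Steps 1--2 gives $\Norm{u-\uh}_{0,\Omega} \lesssim \Norm{\eta}_{0,\Omega} = \Norm{u-\Pcalp u}_{0,\Omega}$, and Theorem~\ref{theorem:hp-approximation-CDG} summed over the quasi-uniform mesh yields the stated rate $\h^{\min(k,\p)+1}\p^{-(k+1)}\Norm{u}_{k+1,\Omega}$. The main obstacle — and the reason the CDG projector is the right tool — is Step 2: the careful bookkeeping showing that \emph{every} term of $\Bcal(\eta,\e_\h)$ except the harmless reaction term is annihilated by the two orthogonality conditions defining $\Pcalp$. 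This is precisely where assumptions~(\textbf{A1})--(\textbf{A2}) enter, guaranteeing that each facet appearing in the jump/boundary terms is the outflow facet of some element, so that~\eqref{definition:Pcalp2} applies. Note also that no term requiring the trace estimate of Proposition~\ref{proposition:trace-estimates} survives here, so that proposition is not needed for the $L^2$ estimate (it will be needed for the DG-norm estimate instead).
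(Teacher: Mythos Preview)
Your proof is correct and follows essentially the same route as the paper's: split via the CDG projector, use Galerkin orthogonality and the alternative form~\eqref{Cockburn-formulation} to show that all terms of $\Bcal(\eta,\cdot)$ except the reaction term are annihilated by~\eqref{definition:Pcalp1}--\eqref{definition:Pcalp2}, then control the discrete error by $\Norm{\eta}_{0,\Omega}$ and conclude with Theorem~\ref{theorem:hp-approximation-CDG}. The only cosmetic difference is that the paper, after reducing $\Bcal(\eh,\vh)$ to $(c(u-\Pcalp u),\vh)_{0,\Omega}$, recognises $\eh$ as the DG solution of a problem with data $f=c(u-\Pcalp u)$, $g=0$ and applies Proposition~\ref{proposition:stability-method} \emph{as stated}, whereas you test with $\vh=\e_\h$ and invoke the coercivity inequality underlying that proposition; the two are equivalent.
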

\begin{proof}
Introduce
\[
\eh:=\uh - \Pcalp u.
\]
The rewriting of the bilinear form in~\eqref{Cockburn-formulation}
and the Galerkin orthogonality~\eqref{Galerkin-orthogonality}
imply
\[
\Bcal(\eh,\vh)
= \Bcal(u-\Pcalp u, \vh)
= \sum_{j=1}^3 T_j,
\]
where
\[
\begin{split}
& T_1 := -(u-\Pcalp u, \betabold \cdot \nablah \vh)_{0,\Omega} , \\
& T_2 := \sum_{\T \in \taun} ( (u-\Pcalp u)^-, \nbfT \cdot \betabold \ \jump{\vh})_{0,\GammaTin}
+ \sum_{\T\in\taun}( u-\Pcalp u, \nbfT \cdot \betabold \ \vh)_{0,\partial\T\cap\Gammaplus} , \\
& T_3 := (c (u-\Pcalp u), \vh)_{0,\Omega} .
\end{split}
\]
The term~$T_1$ vanishes due to~\eqref{definition:Pcalp1}.

As for the term~$T_2$, some manipulations imply
\[
T_2
= -\sum_{\T \in \taun} ( u- \Pcalp u, \nbfT \cdot \betabold \jump{\vh})_{0,\GammaTout}
  + \sum_{\T\in\taun}( u-\Pcalp u, \nbfT \cdot \betabold \ \vh)_{0,\partial\T\cap\Gammaplus} .
\]
The last identity is a consequence of the definition of outflow facets.
Recalling~\eqref{definition:Pcalp2}, also~$T_2$ vanishes.
We end up with
\[
\Bcal(\eh,\vh)
= (c (u-\Pcalp u), \vh)_{0,\Omega} .
\]
This means that $\eh$ solves method~\eqref{method}
with data~$f$ and~$g$ given by~$c(u-\Pcalp u)$ and~$0$, respectively.

This identity and the stability estimate in Proposition~\ref{proposition:stability-method} imply
\[
\Norm{\eh}_{0,\Omega} \lesssim \Norm{c(u-\Pcalp u)}_{0,\Omega}.
\]
{The triangle inequality and the above bound entail
\[
\Norm{u-\uh}_{0,\Omega}
\le \Norm{u-\Pcalp u}_{0,\Omega} + \Norm{u-\uh}_{0,\Omega}
\lesssim
\Norm{u-\Pcalp u}_{0,\Omega} + \Norm{c(u-\Pcalp u)}_{0,\Omega}.
\]
The assertion follows using Theorem~\ref{theorem:hp-approximation-CDG}.}
\end{proof}

We also prove $\h\p$-optimal convergence for method~\eqref{method}
in the full DG norm
\small{\[
|||\vh|||_{\DG}^2
:=\!\! \sum_{\T \in \taun}
\left( \Norm{c \ \vh}_{0,\T}^2
    + \Norm{\vert\nbfT \cdot \betabold \vert^{\frac12}\vh^-}_{0,\GammaTin \cap \Gammaminus}^2
    \!\!\!\!+ \Norm{\vert\nbfT \cdot \betabold \vert^{\frac12}\vh^+}_{0,\GammaTout \cap \Gammaplus}^2
    \!\!\!\!+ \Norm{\vert\nbfT \cdot \betabold \vert^{\frac12}\jump{\vh}}_{0,\GammaTin \setminus \Gammaminus}^2 \right).
\]}\normalsize{}
\begin{theorem} \label{theorem:convergence-full-norm}
Let~$\taun$ be a shape-regular, quasi uniform mesh satisfying the assumptions (\textbf{A1})--(\textbf{A2}),
and~$u$ and~$\uh$ be the solutions to~\eqref{weak-formulation} and~\eqref{method}.
If~\eqref{standard-assumption-convection} holds true
and~$u$ belongs to~$H^{k+1}(\Omega)$, {$k$ positive,}
then there exists a positive constant~$C$ independent of~$\h$ and~$\p$
but possibly depending on~$\sigma$ such that
\[
|||u-\uh|||_{\DG}
\le C \frac{\h^{\min(k,\p)+\frac12}}{\p^{k+\frac12}} \Norm{u}_{k+1,\Omega}.
\]
\end{theorem}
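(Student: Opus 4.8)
The plan is to repeat the proof of Theorem~\ref{theorem:hp-a-priori} almost verbatim and then to upgrade the resulting $L^2$ estimate to the full $\DG$ norm using the trace bounds of Lemma~\ref{lemma:Chernov-Melenk} and Proposition~\ref{proposition:trace-estimates}. First I would set $\eh:=\uh-\Pcalp u$. As in the proof of Theorem~\ref{theorem:hp-a-priori}, the rewriting~\eqref{Cockburn-formulation}, the Galerkin orthogonality~\eqref{Galerkin-orthogonality}, and the orthogonality relations~\eqref{definition:Pcalp1}--\eqref{definition:Pcalp2} give $\Bcal(\eh,\vh)=(c(u-\Pcalp u),\vh)_{0,\Omega}$ for every $\vh\in\Vh$, so that $\eh$ is the solution of~\eqref{method} with data $f=c(u-\Pcalp u)$ and $g=0$. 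Since every summand of $|||\cdot|||_{\DG}^2$ is controlled by the functional on the left-hand side of Proposition~\ref{proposition:stability-method}, up to a constant depending only on the problem data (note that $\GammaTin\setminus\Gammaminus\subset\GammaTin$, that $\Gammaplus,\Gammaminus\subset\Gamma$, and that $|c|\le\Norm{c}_{L^\infty(\Omega)}$), applying that proposition to $\eh$ gives $|||\eh|||_{\DG}\lesssim\Norm{c(u-\Pcalp u)}_{0,\Omega}\lesssim\Norm{u-\Pcalp u}_{0,\Omega}$. By Theorem~\ref{theorem:hp-approximation-CDG}, summed over $\taun$, this is $\lesssim\h^{\min(\p,k)+1}\p^{-(k+1)}\Norm{u}_{k+1,\Omega}$, a strictly higher-order rate than the claimed one; hence $|||\eh|||_{\DG}$ will be negligible. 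By the triangle inequality $|||u-\uh|||_{\DG}\le|||u-\Pcalp u|||_{\DG}+|||\eh|||_{\DG}$, it then remains to estimate $|||u-\Pcalp u|||_{\DG}$.

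For this I would bound the four kinds of contributions to $|||u-\Pcalp u|||_{\DG}^2$ separately. The volume part $\sum_{\T\in\taun}\Norm{c(u-\Pcalp u)}_{0,\T}^2$ is again controlled, via Theorem~\ref{theorem:hp-approximation-CDG}, by a strictly higher-order quantity. The facet parts involve only inflow and outflow facets, characteristic facets carrying the zero weight $|\nbfT\cdot\betabold|=0$. Since $u\in H^{k+1}(\Omega)\subset H^1(\Omega)$ we have $\jump{u}=0$, so on an interior inflow facet $\F$ of an element $\Tminus$ one has $\jump{u-\Pcalp u}=(u-\Pcalp u)|_{\Tminus}-(u-\Pcalp u)|_{\Tplus}$. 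I would bound $\Norm{(u-\Pcalp u)|_{\Tminus}}_{0,\F}$ by Proposition~\ref{proposition:trace-estimates}, since $\F$ is an inflow facet of $\Tminus$; and, invoking assumption~(\textbf{A2}) to see that $\F$ is contained in the outflow facet of $\Tplus$, I would bound $\Norm{(u-\Pcalp u)|_{\Tplus}}_{0,\F}$ by the outflow-facet estimate recalled at the start of Section~\ref{subsection:trace-estimates}, which, thanks to~\eqref{definition:Pcalp2} and Lemma~\ref{lemma:Chernov-Melenk}, reads $\lesssim(\hT/\p)^{\min(\p,k)+\frac12}\SemiNorm{u}_{k+1,\Tplus}$ and is in fact half an order better in $\h$ than needed. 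The boundary inflow terms $\Norm{|\nbfT\cdot\betabold|^{\frac12}(u-\Pcalp u)^-}_{0,\GammaTin\cap\Gammaminus}$ are treated by Proposition~\ref{proposition:trace-estimates} as well, and the boundary outflow terms $\Norm{|\nbfT\cdot\betabold|^{\frac12}(u-\Pcalp u)^+}_{0,\GammaTout\cap\Gammaplus}$ by Lemma~\ref{lemma:Chernov-Melenk}. Bounding $|\nbfT\cdot\betabold|$ by $|\betabold|$, using quasi-uniformity to replace each $\hT$ by $\h$, shape-regularity to control the number of facets per element, and $\sum_{\T\in\taun}\SemiNorm{u}_{k+1,\T}^2\le\Norm{u}_{k+1,\Omega}^2$, summation over $\taun$ yields $|||u-\Pcalp u|||_{\DG}\lesssim\h^{\min(\p,k)+\frac12}\p^{-(k+\frac12)}\Norm{u}_{k+1,\Omega}$, which combined with the estimate on $|||\eh|||_{\DG}$ gives the assertion.

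The main obstacle I expect is the bookkeeping of the facet contributions to $|||u-\Pcalp u|||_{\DG}$: each term of the $\DG$ norm must be matched with the right trace estimate, and in particular the jump across an interior facet must be split into a downwind trace, governed by the inflow-facet estimate of Proposition~\ref{proposition:trace-estimates} (which fixes the final, half-order-reduced rate), and an upwind trace, governed by the sharper outflow-facet estimate of Lemma~\ref{lemma:Chernov-Melenk} (which is harmless). Everything else — the $\eh$ trick, the consistency and stability argument, and the domination of $|||\cdot|||_{\DG}^2$ by the stability functional of Proposition~\ref{proposition:stability-method} — is entirely parallel to the proof of Theorem~\ref{theorem:hp-a-priori}.
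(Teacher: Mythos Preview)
Your argument is correct, but it takes a genuinely different route from the paper's own proof. The paper does \emph{not} use the CDG projector for Theorem~\ref{theorem:convergence-full-norm} at all: it sets $\eta:=u-\Pizp u$ (the plain $L^2$ projection), invokes the abstract bound from~\cite[Theorem~3.7]{Houston-Schwab-Suli:2000} to obtain $|||u-\uh|||_{\DG}\lesssim A+B$ with $A$ a volume term and $B$ a sum of trace terms in~$\eta$, and then closes with standard $\h\p$ approximation of~$\Pizp$ and Lemma~\ref{lemma:Chernov-Melenk}. In particular, neither Theorem~\ref{theorem:hp-approximation-CDG} nor Proposition~\ref{proposition:trace-estimates} enters, and the special structure of the mesh (assumptions~(\textbf{A1})--(\textbf{A2})) is in fact not used in the argument. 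Your approach instead stays parallel to the $L^2$ proof: you split via~$\Pcalp$, control $|||\eh|||_{\DG}$ by the stability functional of Proposition~\ref{proposition:stability-method}, and then estimate $|||u-\Pcalp u|||_{\DG}$ facet by facet using Proposition~\ref{proposition:trace-estimates} on inflow facets and the outflow estimate from Section~\ref{subsection:trace-estimates}. This is perfectly valid and has the merit of exercising the full CDG machinery developed in the paper; the paper's route is shorter and makes clear that the $\DG$-norm estimate is really a direct consequence of the simplicial trace bound of Lemma~\ref{lemma:Chernov-Melenk}, independently of the CDG projector.

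One small inaccuracy: the outflow-facet estimate you quote, $\Norm{u-\Pcalp u}_{0,\FplusT}\lesssim(\hT/\p)^{\min(\p,k)+\frac12}\SemiNorm{u}_{k+1,\T}$, is \emph{exactly} the target rate, not ``half an order better in~$\h$'' as you write; it matches the inflow-facet rate from Proposition~\ref{proposition:trace-estimates}. This slip is harmless for the proof.
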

\begin{proof}
This is the simplicial version of \cite[Theorem~3.7]{Houston-Schwab-Suli:2000}.
As in the proof of that result, given~$\eta:=u-\Pizp u$,
it is possible to show that
\small{\[
\begin{split}
& ||| u-\uh|||_{\DG} \\
& \lesssim (\sum_{\T \in \taun} \Norm{\eta}_{0,\T}^2)^\frac12
    + \Big(\sum_{\T \in \taun}
        \Big(\Norm{\vert\nbfT \cdot \betabold \vert^{\frac12}\eta^+}_{0,\GammaTout \cap \Gammaplus}^2
         + \Norm{\vert\nbfT \cdot \betabold \vert^{\frac12}\eta^+}_{0,\GammaTin \cap \Gammaminus}^2\\
& \qquad\qquad  + \Norm{\vert\nbfT \cdot \betabold \vert^{\frac12}\eta^-}_{0,\GammaTout \setminus \Gammaminus}^2
         + \Norm{\vert\nbfT \cdot \betabold \vert^{\frac12}\eta^+}_{0,\GammaTout \setminus \Gammaminus}^2\Big)\Big)^\frac12
 =: A+B.
\end{split}
\]}\normalsize{}
The hidden constant above is independent of~$\h$ and~$\p$.

The term~$A$ is dealt with using standard
$\h\p$ {Babu\v ska-Suri type} approximation properties
of the $L^2$ projection {in the $L^2$ norm};
see, e.g., \cite[Lemma~4.5]{Babuvska-Suri:1987}.
The term~$B$ is dealt with using
{approximation properties
of the $L^2$ projection in the boundary $L^2$ norm
as discussed in Lemma~\ref{lemma:Chernov-Melenk}.}
\end{proof}

\noindent Theorems~\ref{theorem:hp-a-priori} and~\ref{theorem:convergence-full-norm}
improve the current state of the art of the literature along different directions:
\begin{itemize}
    \item \cite[Theorem~2.2]{Cockburn-Dong-Guzman:2008} shows $\h$-optimal convergence in the $L^2$ norm;
    here, we provide full explicit track of the $\p$-convergence;
    \item \cite[Theorem~3.7]{Houston-Schwab-Suli:2000}
    and \cite[Theorem~5.1]{Houston-Schwab-Suli:2002}
    display $\h\p$-optimal convergence in the full DG norm on Cartesian-type meshes;
    here, we show optimal $\h\p$-optimal convergence in the full DG norm on special simplicial meshes
    and also improved convergence (on simplicial meshes) in the $L^2$ norm.
\end{itemize}
The theoretical limitations of Theorems~\ref{theorem:hp-a-priori} and~\ref{theorem:convergence-full-norm} are the use
\begin{itemize}
    \item of the special meshes in Section~\ref{subsection:meshes} from~\cite{Cockburn-Dong-Guzman:2008};
    \item of assumption~\eqref{standard-assumption-convection}.
\end{itemize}
In Section~\ref{section:numerical-experiments} below,
we investigate whether the two limitations above can be overcome in practice
or are necessary condition for the $\h\p$-optimality.

\begin{remark}
Following the proof of Theorem~\ref{theorem:convergence-full-norm},
it is possible to derive $\h\p$-optimal convergence
of DG methods on simplicial meshes
for diffusion-advection-reaction problems
as in~\cite{Houston-Schwab-Suli:2002}.
In words, one needs to care also of an interior penalty discontinuous discretization
of a diffusion term;
the error of the method is then estimated from above by the distance in a DG type norm
between the exact solution and its $L^2$ projection.
Such a quantity is then estimated optimally using
the {Babu\v ska-Suri type} approximation properties
of the $L^2$ projection in the $L^2$ norm (using~\cite{Babuvska-Suri:1987}),
in the $H^1$ seminorm (using~\cite{Canuto-Quarteroni:1982}),
and in the $L^2$ norm on the boundary (using Lemma~\ref{lemma:Chernov-Melenk}).
\end{remark}

\section{Numerical experiments} \label{section:numerical-experiments}
We assess the convergence rates
of method~\eqref{method} in the $L^2$ and full $\DG$ norms
proven in Theorems~\ref{theorem:hp-a-priori} and~\ref{theorem:convergence-full-norm}.
We focus on the $\p$-version of the method in two dimensions,
fix two triangular meshes of~$50$ and~$32$ triangles
partitioning the square domain~$\Omega:=(-1,1)^2$,
see Figure~\ref{figure:meshes},
and use polynomial degrees~$\p$ from~$1$ to~$40$.
Standard composite Gaussian quadrature is employed throughout.
We pick an $L^2$ orthonormal basis over simplices as that constructed in~\cite{Karniadakis-Sherwin:2005}.

We consider two test cases.
The first one involves a constant convection field such that
assumption~\eqref{standard-assumption-convection} holds true
and  the meshes satisfy the construction in Section~\ref{subsection:meshes}.
Instead, a second test case is devoted to check the performance of the scheme
even if the convection field and the meshes do not satisfy the assumptions
needed in the analysis of Sections~\ref{section:main} and~\ref{section:convergence}.

\begin{figure}[h]
\centering
 \includegraphics[height=2in,width=2in]{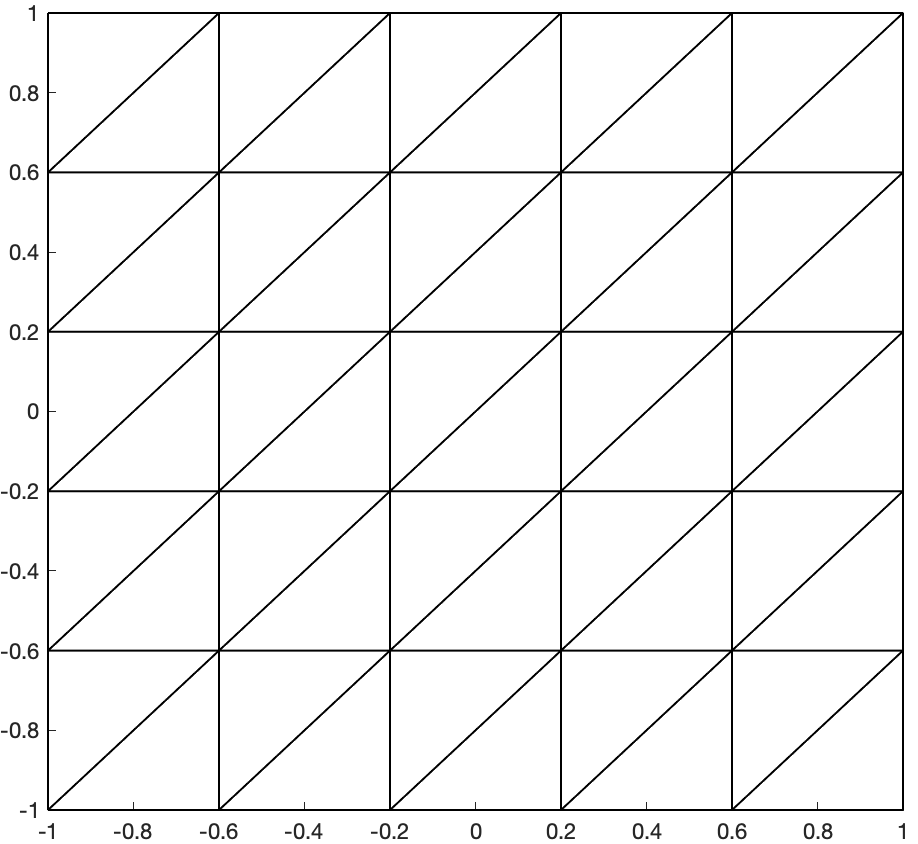}
 \qquad\qquad\qquad\qquad
 \includegraphics[height=2in,width=2in]{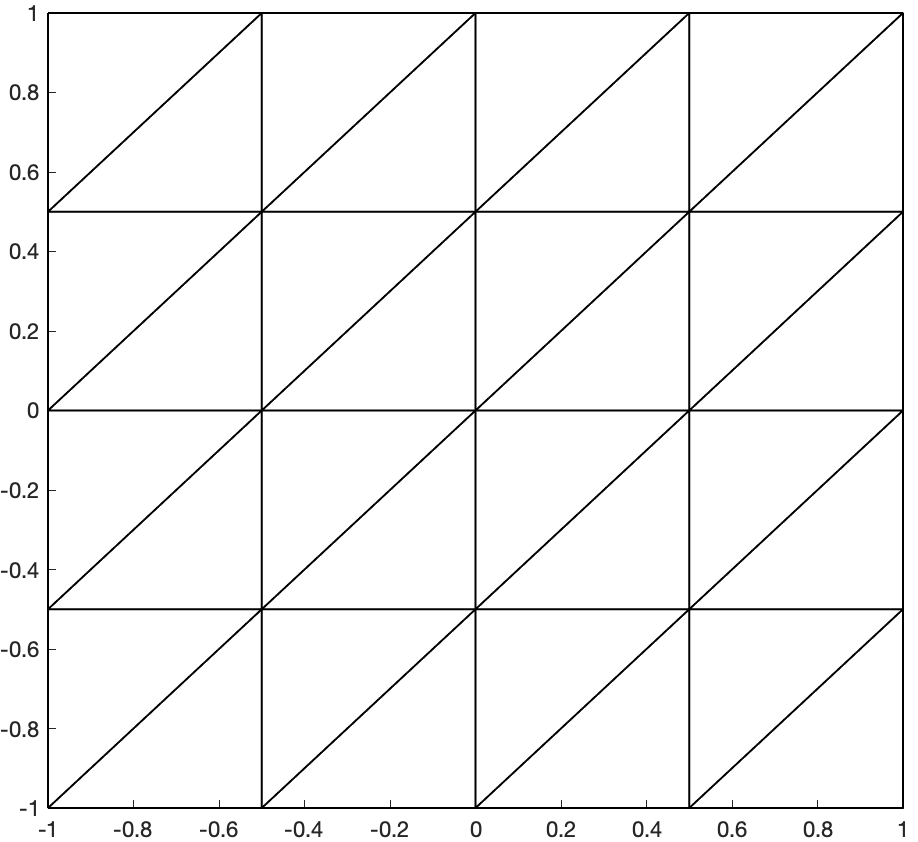}
\caption{
\emph{Left-panel}: mesh consisting of 50 triangles.
\emph{Right-panel}:  mesh consisting of 32 triangles.}
 \label{figure:meshes}
\end{figure}

\paragraph*{Test case~1.}
As in~\cite[Section~5.2]{Castillo-Cockburn-Perugia-Schoetzau:2000},
we consider the convection field~$\betabold =(1,1)$,
the reaction coefficient~$c=1$,
and the exact solution, for given positive~$\alpha$,
\begin{equation} \label{u1}
u(x,y)
= \begin{cases}
\cos (\pi y/2 )                 & \text{ in } (-1,0] \times (-1,1)  \\
\cos (\pi y/2 ) + x^\alpha      & \text{ in } (0 ,1) \times (-1,1). \\
\end{cases}
\end{equation}
The right-hand side~$f$ is computed accordingly.
The two meshes in Figure~\ref{figure:meshes}
are such that each element has only one outflow facet for the given convection field~$\betabold$.

The solution~$u$ belongs to $H^{1/2+\alpha -\epsilon}(\Omega)$
for any positive and arbitrarily small~$\epsilon$.
The singularity is located in the interior of a mesh cell
for the mesh in Figure~\ref{figure:meshes} (\emph{left-panel})
and at the interface of several cell elements
for the mesh in Figure~\ref{figure:meshes} (\emph{right-panel}).

We report the results in Figure~\ref{figure:u1-50} and~\ref{figure:u1-32}
for the~$50$ and~$32$ triangles cases, respectively.
We pick~$\alpha=0.5$, $1.5$, and~$2.5$.

\begin{figure}[h]
\centering
 \includegraphics[height=2.7in,width=2.7in]{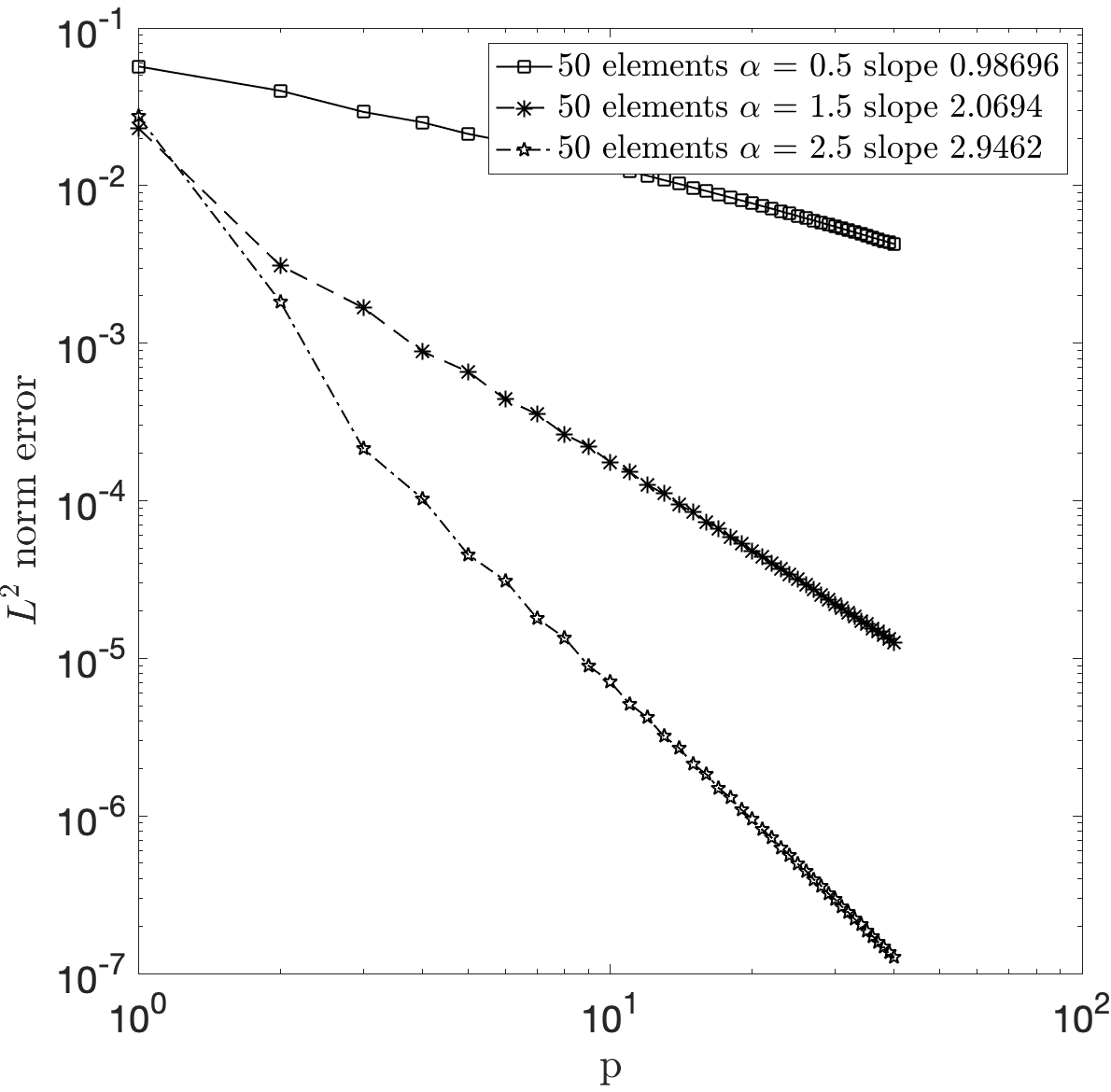}
 \qquad
 \includegraphics[height=2.7in,width=2.7in]{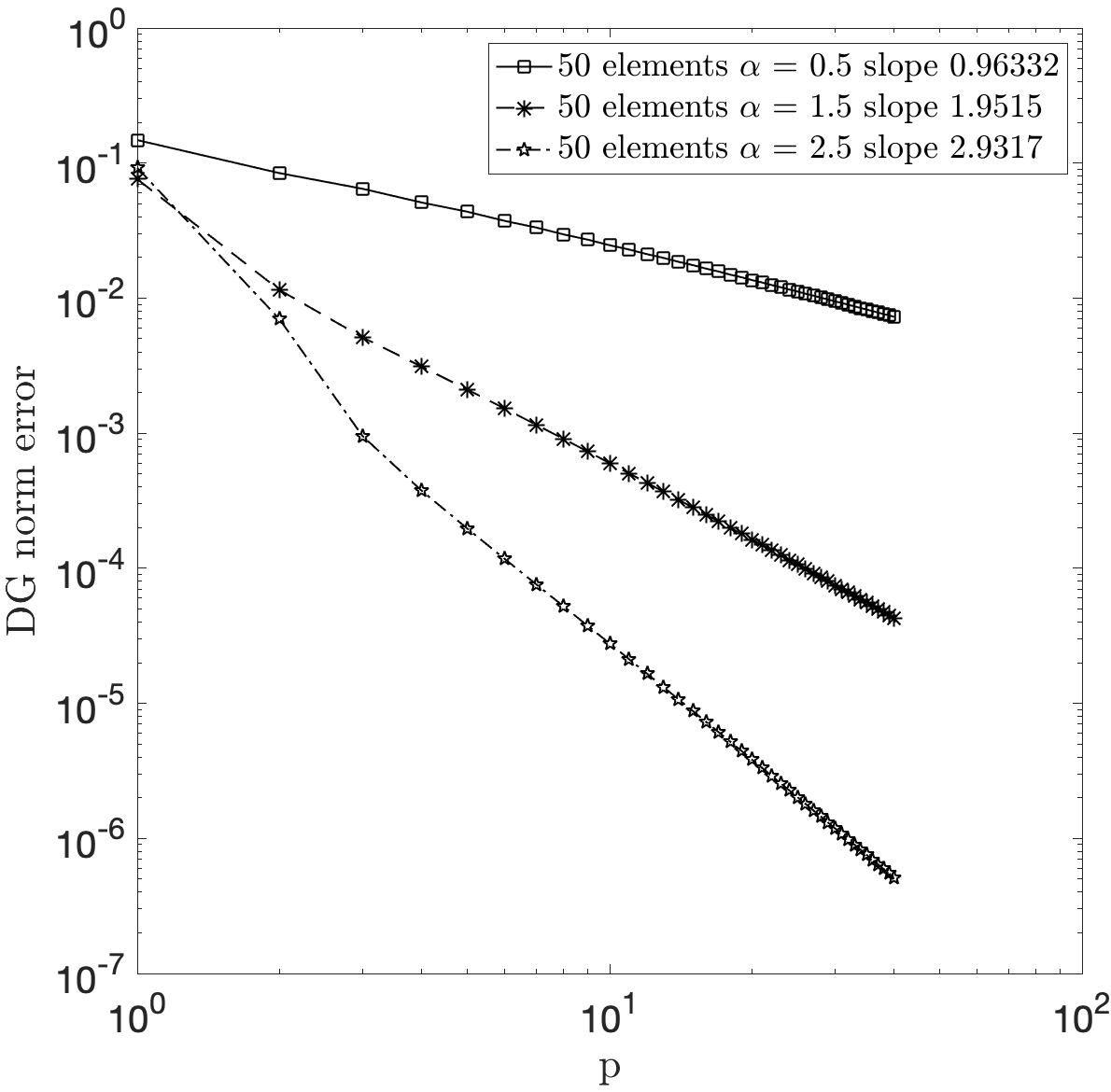}
\caption{$\p$-version for the case of constant convection field~$\betabold$.
We consider the exact solution~$u$ in~\eqref{u1}
and fix the triangular mesh with 50 elements as in Figure~\ref{figure:meshes}
so as the mesh assumptions in Section~\ref{subsection:meshes} are satisfied.
The singularity lies in the interior of a cell element.
\emph{Left-panel}: $L^2$ norm error.
\emph{Right-panel}: DG norm error.}
\label{figure:u1-50}
\end{figure}
\begin{figure}[h]
\centering
 \includegraphics[height=2.7in,width=2.7in]{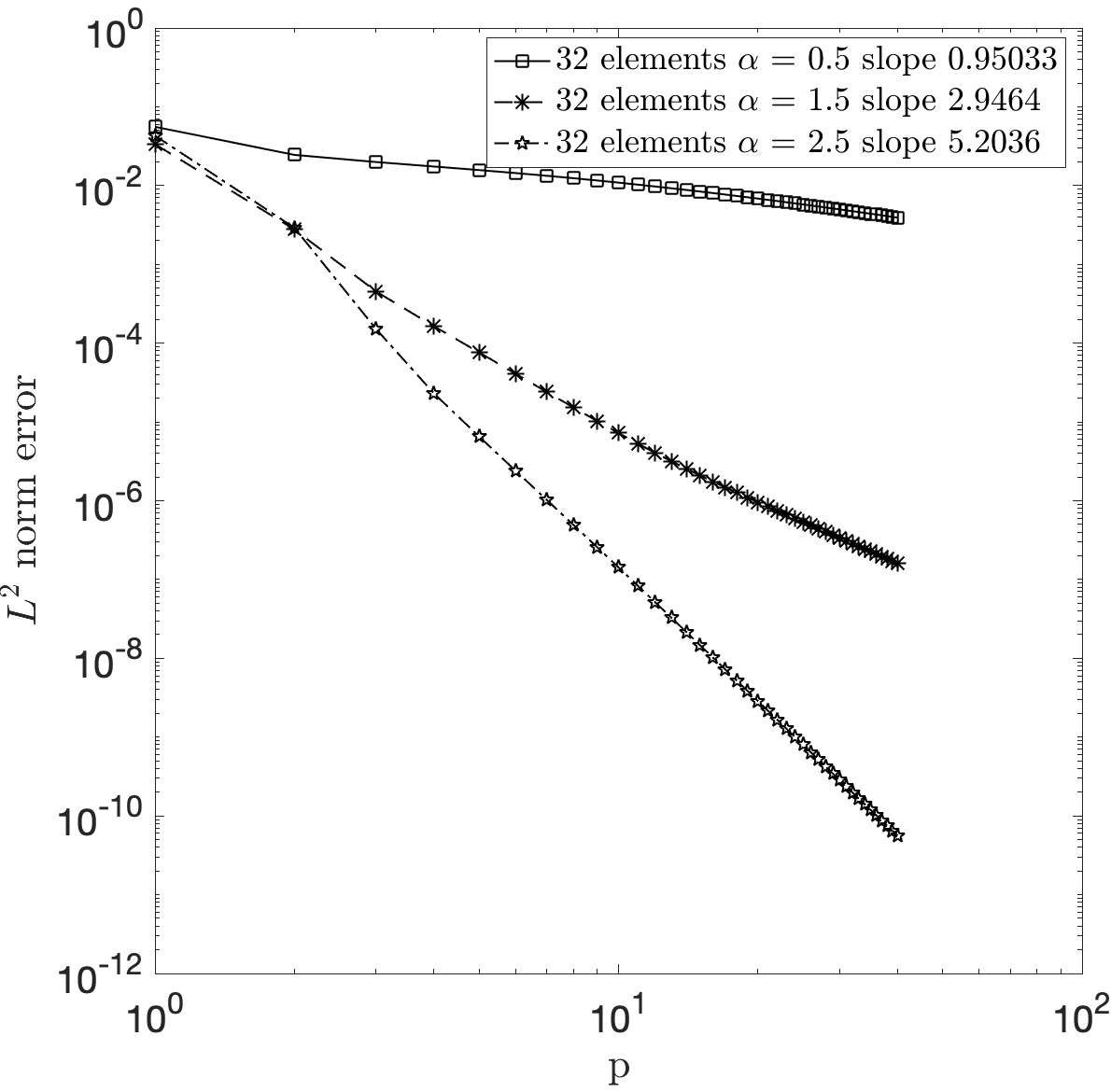}
 \qquad
 \includegraphics[height=2.7in,width=2.7in]{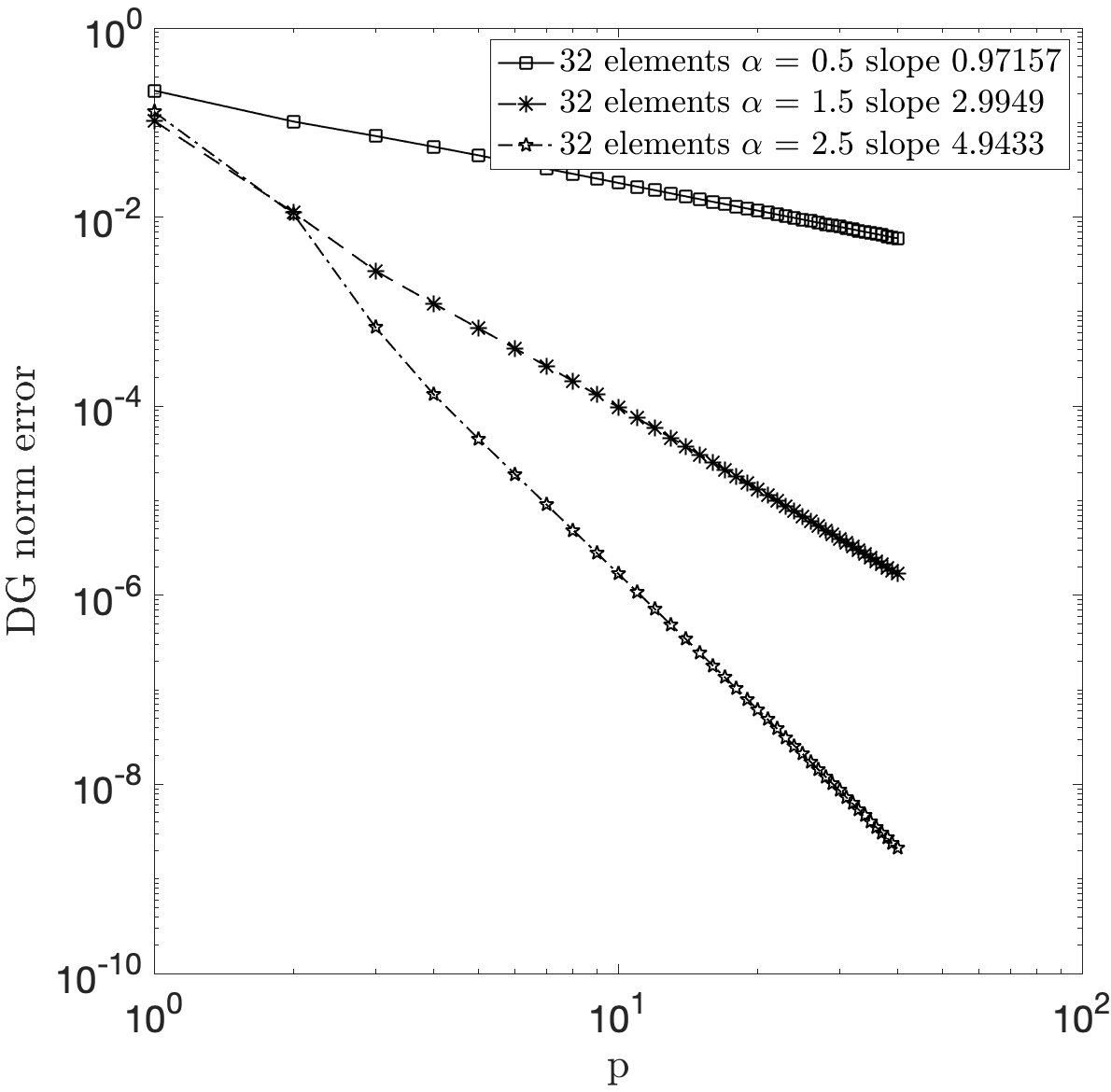}
\caption{$\p$-version for the case of constant convection field~$\betabold$.
We consider the exact solution~$u$ in~\eqref{u1}
and fix the triangular mesh with 32 elements as in Figure~\ref{figure:meshes}
so as the mesh assumptions in Section~\ref{subsection:meshes} are satisfied.
The singularity lies at the interface of several cell elements.
\emph{Left-panel}: $L^2$ norm error.
\emph{Right-panel}: DG norm error.}
 \label{figure:u1-32}
\end{figure}
\medskip

The convergence rate of the DG scheme under~$\p$ refinement
in the $L^2$-norm error is of order $\mathcal{O}(\p^{-(1/2+\alpha)})$, which is optimal.
The convergence rate in the DG norm is also of order $\mathcal{O}(\p^{-(1/2+\alpha)})$,
i.e., we observe {convergence rates
better than those proven in theory}.

{For the case of 32 triangular elements},
the singularity lies at the interface of several mesh cell elements.
Therefore, the usual doubling of the convergence takes place,
see, e.g., \cite{Babuvska-Suri:1987b},
and the convergence rate is $\mathcal{O}(\p^{-2\alpha})$ in both norms.

\paragraph*{Test case~2.}
As in~\cite[Example~1]{Houston-Schwab-Suli:2002},
we consider the convection field~$\betabold =(2-y^2,2-x)$,
the reaction coefficient~$c = 1+(1+x)(1+y^2)$,
and the exact solution given by~$u$ in~\eqref{u1}.
The right-hand side~$f$ is computed accordingly.
The two meshes in Figure~\ref{figure:meshes}
do not satisfy the mesh assumptions in Section~\ref{subsection:meshes}
for  the given convection field~$\betabold$.
The regularity of the exact solution~$u$ is already discussed in Test case~1.

We report the results in Figure~\ref{figure:u2-50} and~\ref{figure:u2-32}
for the~$50$ and~$32$ triangles cases, respectively.
We pick~$\alpha=0.5$, $1.5$, and~$2.5$.

\begin{figure}[h]
\centering
 \includegraphics[height=2.7in,width=2.7in]{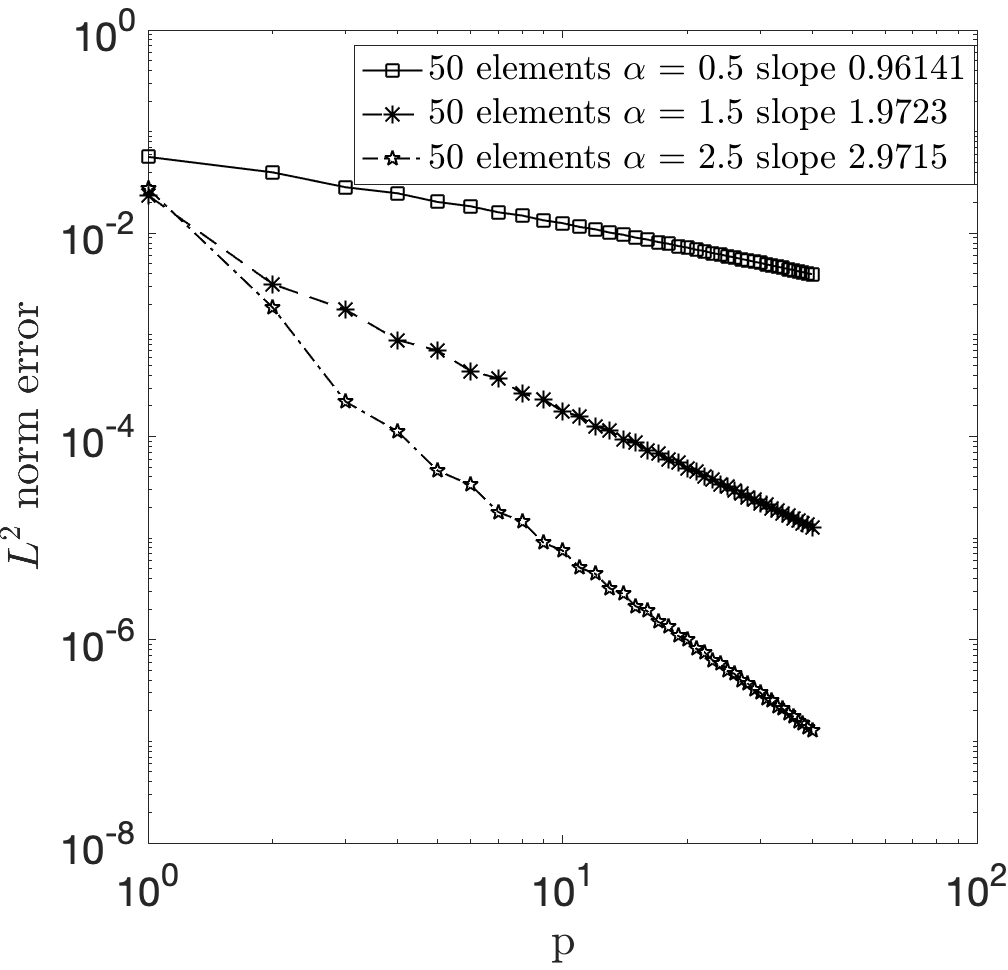}
 \qquad
 \includegraphics[height=2.7in,width=2.7in]{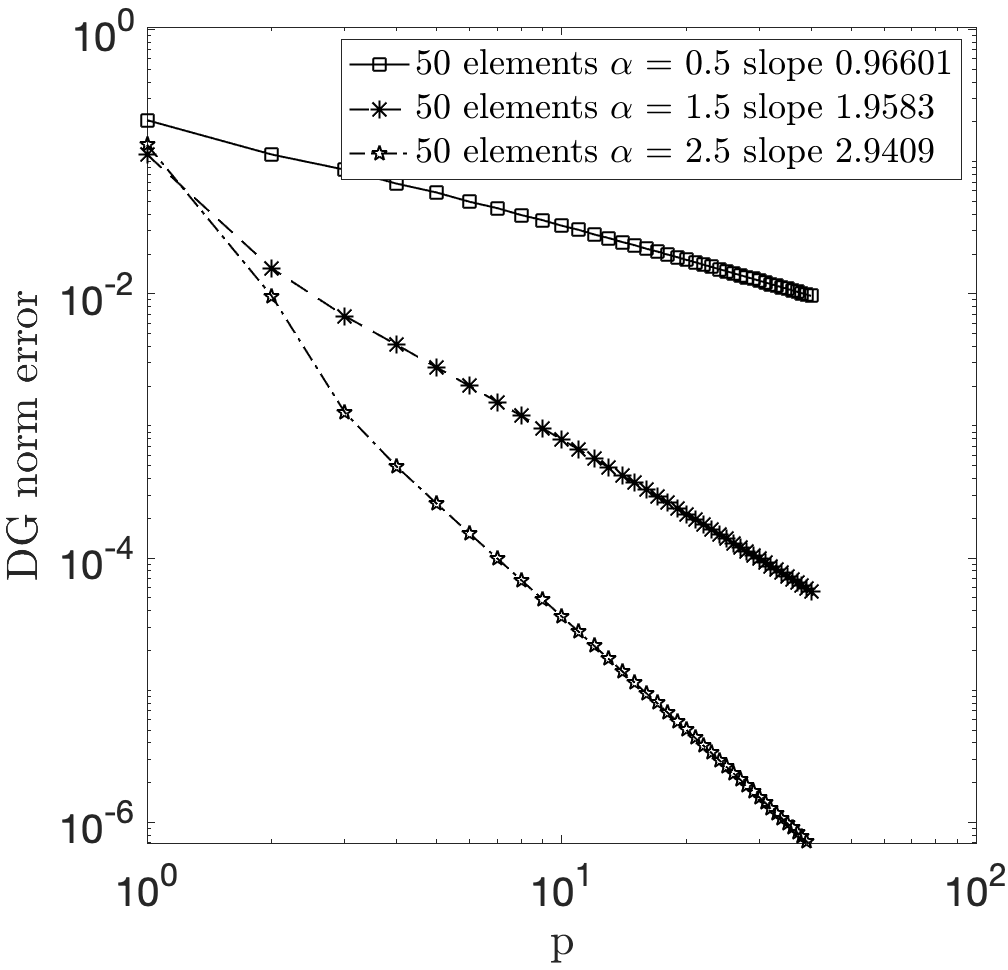}
\caption{$\p$-version for the case of variable convection field~$\betabold$.
We consider the exact solution~$u$ in~\eqref{u1}
and fix the triangular mesh with 50 elements as in Figure~\ref{figure:meshes}.
The mesh assumptions in Section~\ref{subsection:meshes} are not satisfied.
The singularity lies in the interior of a cell element.
\emph{Left-panel}: $L^2$ norm error.
\emph{Right-panel}: DG norm error.}
 \label{figure:u2-50}
\end{figure}
\begin{figure}[h]
\centering
 \includegraphics[height=2.7in,width=2.7in]{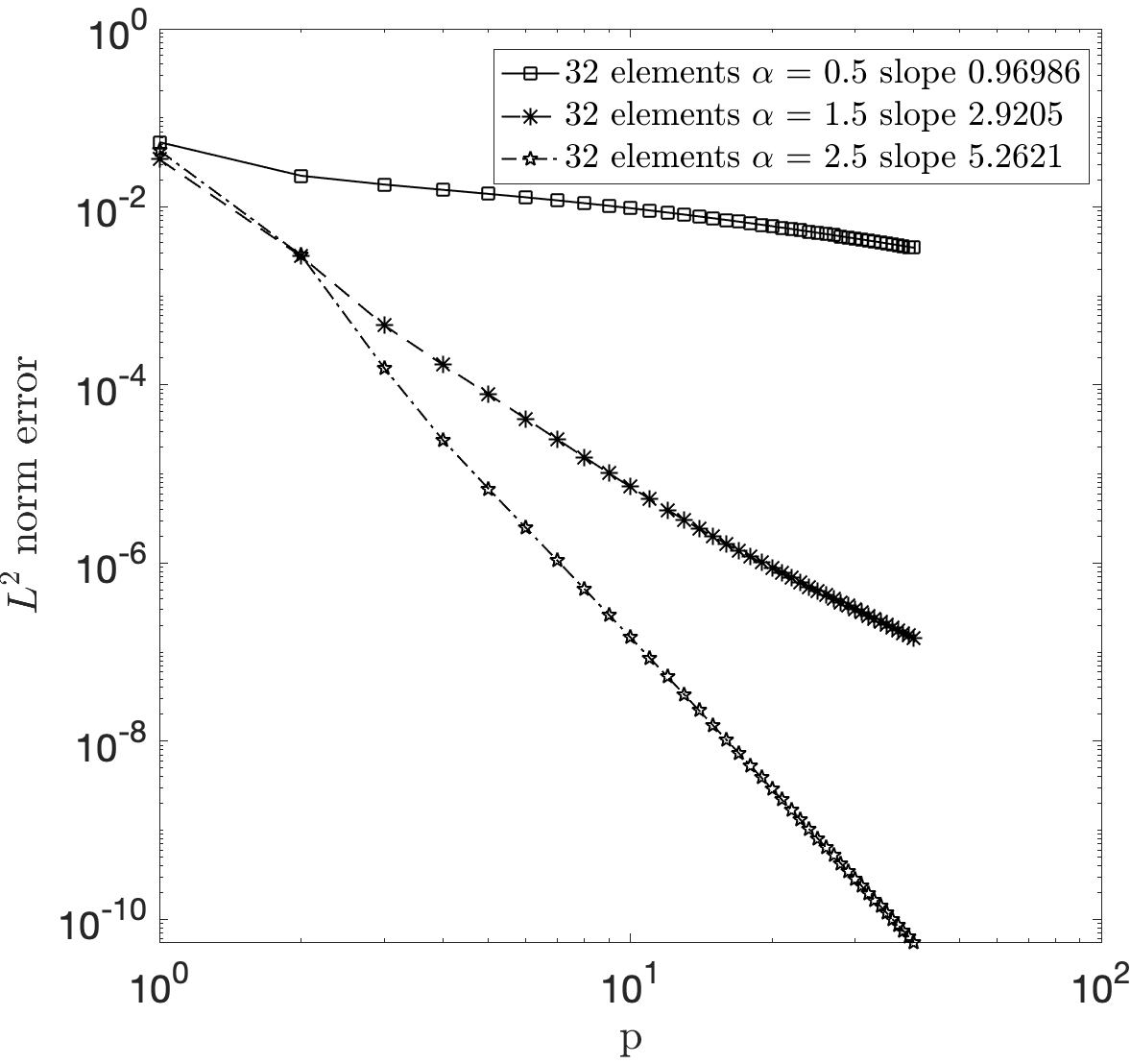}
 \qquad
 \includegraphics[height=2.7in,width=2.7in]{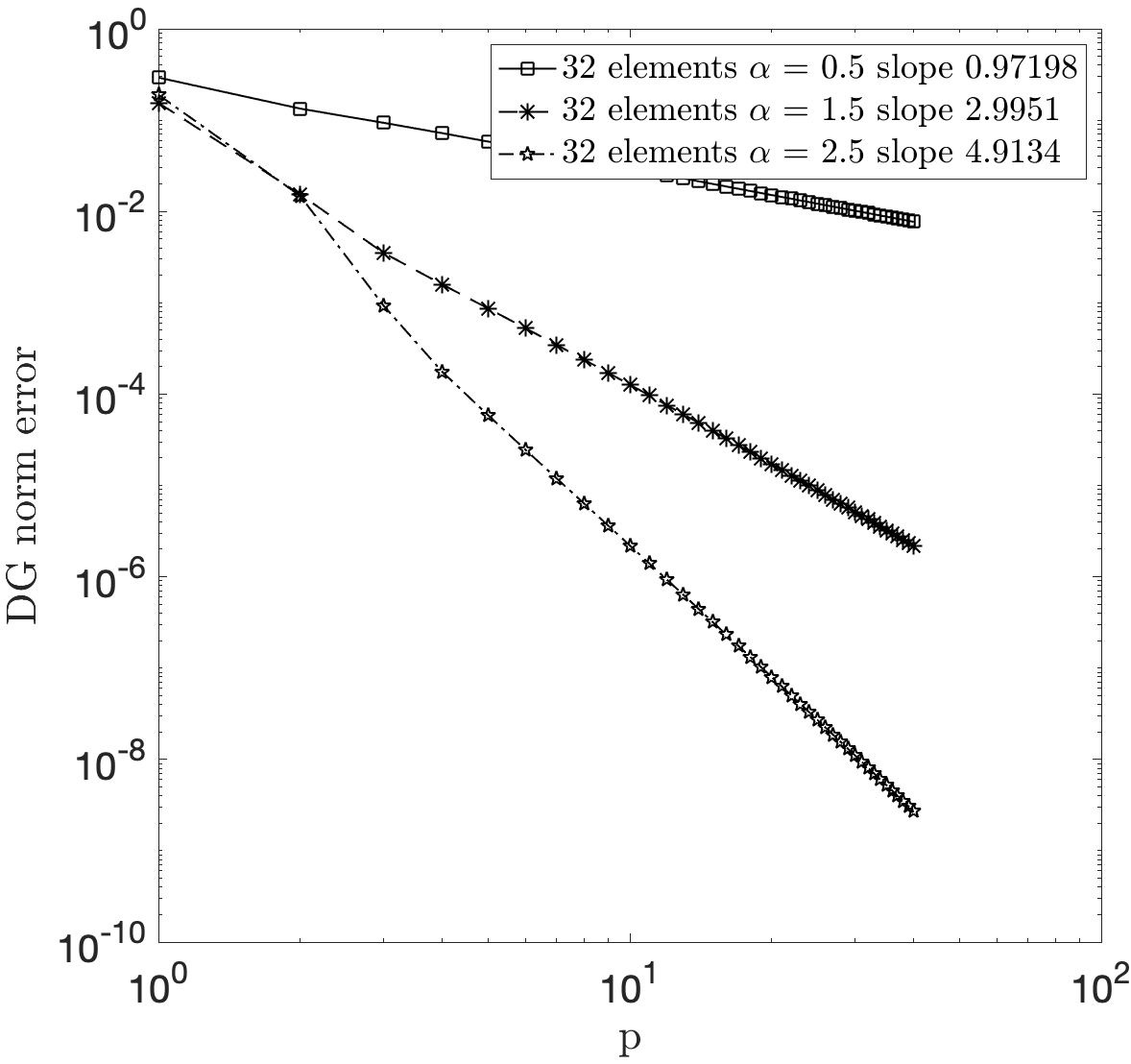}
\caption{$\p$-version for the case of variable convection field~$\betabold$.
We consider the exact solution~$u$ in~\eqref{u1}
and fix the triangular mesh with 32 elements as in Figure~\ref{figure:meshes}.
The mesh assumptions in Section~\ref{subsection:meshes} are not satisfied.
The singularity lies at the interface of several cell elements.
\emph{Left-panel}: $L^2$ norm error.
\emph{Right-panel}: DG norm error.}
\label{figure:u2-32}
\end{figure}
\medskip

We observe convergence rates as those for the constant convection field case.
This is in accordance with the numerical observation
in~\cite[Example~1]{Houston-Schwab-Suli:2002}.

\medskip

In both test cases, the use of an orthonormal basis implies
that the condition number remains moderate,
whence the convergence rates do not deteriorate for high~$\p$.

\section{Conclusions} \label{section:conclusions}
We analyzed $\p$-optimal approximation properties of the CDG operator
in the $L^2$ norm and the $L^2$ norm of the trace
on simplices with only one outflow facet in 1D, 2D, and 3D.
These results are instrumental in deriving
$\p$-optimal error estimates for the original DG method on special simplicial meshes.
Numerical experiments validate the predicted convergence rates measured in the $L^2$ norm
also for convection fields and meshes not satisfying the assumptions used in our proofs;
{convergence rates} are observed for the error measured
in the full DG norm {that are
better than those proven in theory}.

\paragraph*{Acknowledgements.}
We are very grateful to the anonymous referees
who helped us to improve the presentation and readability
of the paper.
LM is member of the Gruppo Nazionale Calcolo Scientifico-Istituto Nazionale di Alta Matematica (GNCS-INdAM)
and was partially supported by the Italian MIUR
through the PRIN Grant No. 2022-NAZ-0279.

{\footnotesize \bibliography{bibliogr}} \bibliographystyle{plain}

\begin{thebibliography}{10}

\bibitem{Babuvska-Suri:1987}
I.~Babu{\v{s}}ka and M.~Suri.
\newblock The $hp$ version of the finite element method with quasiuniform
  meshes.
\newblock {\em ESAIM Math. Model. Numer. Anal.}, 21(2):199--238, 1987.

\bibitem{Babuvska-Suri:1987b}
I.~Babu{\v{s}}ka and M.~Suri.
\newblock The optimal convergence rate of the $p$-version of the finite element
  method.
\newblock {\em SIAM J. Numer. Anal.}, 24(4):750--776, 1987.

\bibitem{Bardos:1970}
C.~Bardos.
\newblock Probl{\`e}mes aux limites pour les {\'e}quations aux d{\'e}riv{\'e}es
  partielles du premier ordre {\`a} coefficients r{\'e}els; th{\'e}or{\`e}mes
  d'approximation; application {\`a} l'{\'e}quation de transport.
\newblock {\em Ann. Sci. de l'Ecole Norm. Superieure}, 3(2):185--233, 1970.

\bibitem{Beuchler-Schoeberl:2006}
S.~Beuchler and J.~Sch{\"o}berl.
\newblock New shape functions for triangular $p$-{FEM} using integrated
  {J}acobi polynomials.
\newblock {\em Numer. Math.}, 103(3):339--366, 2006.

\bibitem{Canuto-Quarteroni:1982}
C.~Canuto and A.~Quarteroni.
\newblock Approximation results for orthogonal polynomials in {S}obolev spaces.
\newblock {\em Math. Comp.}, 38(157):67--86, 1982.

\bibitem{Castillo-Cockburn-Perugia-Schoetzau:2000}
P.~Castillo, B.~Cockburn, I.~Perugia, and D.~Sch\"{o}tzau.
\newblock An a priori error analysis of the local discontinuous {G}alerkin
  method for elliptic problems.
\newblock {\em SIAM J. Numer. Anal.}, 38(5):1676--1706, 2000.

\bibitem{Chernov:2012}
A.~Chernov.
\newblock Optimal convergence estimates for the trace of the polynomial
  ${L}^2$-projection operator on a simplex.
\newblock {\em Math. Comp.}, 81(278):765--787, 2012.

\bibitem{Cockburn-Dong-Guzman:2008}
B.~Cockburn, B.~Dong, and J.~Guzm{\'a}n.
\newblock Optimal convergence of the original {DG} method for the
  transport-reaction equation on special meshes.
\newblock {\em SIAM J. Numer. Anal.}, 46(3):1250--1265, 2008.

\bibitem{Cockburn-Dong-Guzman:2008B}
B.~Cockburn, B.~Dong, and J.~Guzm{\'a}n.
\newblock A superconvergent {LDG}-hybridizable {G}alerkin method for
  second-order elliptic problems.
\newblock {\em Math. Comp.}, 77(264):1887--1916, 2008.

\bibitem{Cockburn-Dong-Guzman-Qian:2010}
B.~Cockburn, B.~Dong, J.~Guzm{\'a}n, and J.~Qian.
\newblock Optimal convergence of the original {DG} method on special meshes for
  variable transport velocity.
\newblock {\em SIAM J. Numer. Anal.}, 48(1):133--146, 2010.

\bibitem{Dong-Mascotto:2021}
Z.~Dong and L.~Mascotto.
\newblock On the suboptimality of the $p$-version discontinuous {G}alerkin
  methods for first order hyperbolic problems.
\newblock {\em Numerical Methods and Algorithms in Science and Engineering},
  700, 2021.

\bibitem{Dubiner:1991}
M.~Dubiner.
\newblock Spectral methods on triangles and other domains.
\newblock {\em J. Sci. Comput.}, 6:345--390, 1991.

\bibitem{Houston-Schwab-Suli:2000}
P.~Houston, Ch. Schwab, and E.~S{\"u}li.
\newblock Stabilized $hp$-finite element methods for first-order hyperbolic
  problems.
\newblock {\em SIAM J. Numer. Anal.}, 37(5):1618--1643, 2000.

\bibitem{Houston-Schwab-Suli:2002}
P.~Houston, Ch. Schwab, and E.~S{\"u}li.
\newblock Discontinuous $hp$-finite element methods for
  advection-diffusion-reaction problems.
\newblock {\em SIAM J. Numer. Anal.}, 39(6):2133--2163, 2002.

\bibitem{Johnson-Pitkaranta:1986}
C.~Johnson and J.~Pitk{\"a}ranta.
\newblock An analysis of the discontinuous {G}alerkin method for a scalar
  hyperbolic equation.
\newblock {\em Math. Comp.}, 46(173):1--26, 1986.

\bibitem{Karniadakis-Sherwin:2005}
G.~E. Karniadakis and S.~Sherwin.
\newblock {\em Spectral/$hp$ element methods for computational fluid dynamics}.
\newblock Oxford University Press, 2005.

\bibitem{Koornwinder:1975}
T.~Koornwinder.
\newblock Two-variable analogues of the classical orthogonal polynomials.
\newblock In {\em Theory and application of special functions}, pages 435--495.
  Elsevier, 1975.

\bibitem{Lesaint-Raviart:1974}
P.~Lasaint and P.-A. Raviart.
\newblock On a finite element method for solving the neutron transport
  equation.
\newblock In {\em Mathematical aspects of finite elements in partial
  differential equations ({P}roc. {S}ympos., {M}ath. {R}es. {C}enter, {U}niv.
  {W}isconsin, {M}adison, {W}is., 1974)}, pages 89--123. Academic Press, New
  York-London, 1974.

\bibitem{Melenk-Wurzer:2014}
J.~M. Melenk and T.~Wurzer.
\newblock On the stability of the boundary trace of the polynomial
  ${L}^2$-projection on triangles and tetrahedra.
\newblock {\em Comp. Math. Appl. Sc.}, 67(4):944--965, 2014.

\bibitem{Peterson:1991}
T.~E. Peterson.
\newblock A note on the convergence of the discontinuous {G}alerkin method for
  a scalar hyperbolic equation.
\newblock {\em SIAM J. Numer. Anal.}, 28(1):133--140, 1991.

\bibitem{Reed-Hill:1973}
W.~H. Reed and T.~R. Hill.
\newblock Triangular mesh methods for the neutron transport equation.
\newblock Technical report, Los Alamos Scientific Lab., 1973.

\bibitem{Schotzau-Schwab:2000}
D.~Sch{\"o}tzau and Ch. Schwab.
\newblock Time discretization of parabolic problems by the $hp$-version of the
  discontinuous {G}alerkin finite element method.
\newblock {\em SIAM J. Numer. Anal.}, 38(3):837--875, 2000.

\bibitem{Shen-Tang-Wang:2011}
J.~Shen, T.~Tang, and L.-L. Wang.
\newblock {\em Spectral methods: algorithms, analysis and applications},
  volume~41.
\newblock Springer Science \& Business Media, 2011.

\bibitem{Warburton-Hestaven:2003}
T.~Warburton and J.~S. Hesthaven.
\newblock On the constants in $hp$-finite element trace inverse inequalities.
\newblock {\em Comput. Methods Appl. Mech. Engrg.}, 192(25):2765--2773, 2003.

\end{thebibliography}

\end{document}